\renewcommand{\leq}{\leqslant}
\renewcommand{\geq}{\geqslant}
\renewcommand{\ge}{\geqslant}
\newcommand{\ptl}{\partial}
\newcommand{\rr}{{\mathbb{R}}}
\newcommand{\cc}{{\mathbb{C}}}
\newcommand{\la}{\lambda}
\newcommand{\hh}{{\mathbb{H}}}
\newcommand{\nn}{{\mathbb{N}}}
\newcommand{\sub}{\subset}
\newcommand{\escpr}[1]{\langle#1\rangle}
\newcommand{\Sg}{\Sigma}
\newcommand{\Om}{\Omega}
\newcommand{\eps}{\varepsilon}
\newcommand{\ga}{\gamma}
\newcommand{\Ga}{\Gamma}
\newcommand{\nuh}{\nu_{h}}
\newcommand{\mnh}{|N_{h}|}
\newcommand{\nt}{\escpr{N,T}}
\newcommand{\n}{\nabla}
\newcommand{\ntnh}{\frac{\escpr{N,T}}{|N_h|}}
\definecolor{grey}{rgb}{.7,.7,.7}
\DeclareMathOperator{\divv}{div}
\DeclareMathOperator{\tor}{Tor}
\DeclareMathOperator{\Jac}{Jac}
\DeclareMathOperator{\supp}{supp}
\newtheorem{theorem}{Theorem}[section]
\newtheorem{proposition}[theorem]{Proposition}
\newtheorem{lemma}[theorem]{Lemma}
\newtheorem{corollary}[theorem]{Corollary}
\theoremstyle{definition}
\theoremstyle{remark}
\newtheorem{remark}[theorem]{Remark}
\numberwithin{equation}{section}
\begin{document}

\title[Area-stationary and stable surfaces of class $C^1$ in $\hh^1$]{Area-stationary and stable surfaces of class $C^1$ \\ in the sub-Riemannian Heisenberg group $\hh^1$}

\author[M.~Galli]{Matteo Galli} \address{Dipartimento di Matematica \\
Universit\`a di Bologna \\ Piazza di Porta S. Donato 5 \\ 40126 Bologna, Italy}
\email{matteo.galli8@unibo.it}

\author[M.~Ritor\'e]{Manuel Ritor\'e} \address{Departamento de
Geometr\'{\i}a y Topolog\'{\i}a \\
Universidad de Granada \\ E--18071 Granada \\ Espa\~na}
\email{ritore@ugr.es}

\date{\today}

\thanks{M. Galli has been supported by the People Programme (Marie Curie Actions) of the European Union's Seventh
Framework Programme FP7/2007-2013/ under REA grant agreement n. 607643. M. Ritor\'e has been supported by Mec-Feder MTM2010-21206-C02-01 and Mineco-Feder MTM2013-48371-C2-1-P research grants.}
\subjclass[2000]{53C17, 49Q20} 
\keywords{Sub-Riemannian geometry, Heisenberg group, area-stationay surfaces, stable surfaces, flux formula, Bernstein's problem}

\begin{abstract}
We consider surfaces of class $C^1$ in the $3$-dimensional sub-Riemannian Heisenberg group $\hh^1$. Assuming the surface is area-stationary, i.e., a critical point of the sub-Riemannian perimeter under compactly supported variations, we show that its regular part is foliated by horizontal straight lines. In case the surface is complete and oriented, without singular points, and stable, i.e., a second order minimum of  perimeter, we prove that the surface must be a vertical plane. This implies the following Bernstein type result: a complete locally area-minimizing intrinsic graph of a $C^1$ function in $\hh^1$ is a vertical plane.
\end{abstract}

\maketitle

\thispagestyle{empty}

\bibliographystyle{abbrv} \nocite{*}


\section{Introduction}
Variational problems related to the sub-Riemannian perimeter introduced by Capogna, Danielli and Garofalo \cite{MR1312686} (see also Garofalo and Nhieu \cite{MR1404326} and Franchi, Serapioni and Serra-Cassano \cite{MR1871966}) have received great interest recently, specially in the Heisenberg groups $\hh^n$. The recent monograph \cite{MR2312336} provides a quite complete survey of recent progress on the subject. Crucial open questions in this theory are the optimal regularity of minimizers, and their characterization under reasonable regularity assumptions.

The optimal regularity of perimeter minimizing sets in $\hh^n$ is still an open question. The boundaries of the conjectured solutions to the isoperimetric problem are of class $C^2$, see \cite{MR2312336}, although there exist examples of area-minimizing horizontal graphs which are just Euclidean Lipschitz, see \cite{MR2262784,MR2455341,MR2448649}. Cheng, Hwang and Yang \cite[Thms.~A, B]{MR2262784} proved existence and uniqueness of $t$-graphs which are Lipschitz continuous solutions of the minimal surface equation in $\hh^n$ under given Dirichlet conditions in a $p$-convex domain. Pinamonti, Serra Cassano, Treu and Vittone \cite[Thm.~1.1]{2013arXiv1307.4442P} have obtained existence and uniqueness of $t$-graphs on domains with boundary data satisfying a bounded slope condition. They also showed that, under this condition, Lipschitz regularity is optimal at least in the first Heisenberg group $\hh^1$, \cite[Example~6.5]{2013arXiv1307.4442P}. The bounded slope condition was also considered in a previous paper by Pauls \cite{MR2043961}. Capogna, Citti and Manfredini \cite{MR2583494} have proven that the intrinsic graph of a Lipschitz continuous function which is a vanishing viscosity solution of the sub-Riemannian minimal surface equation in $\hh^1$ is of class $C^{1,\alpha}$ and it is foliated by characteristic straight lines. A vanishing viscosity solution is nothing but a uniform limit of Riemannian minimal graphs uniformly bounded in Lipschitz norm. The same authors obtained higher regularity in $\hh^n$, $n>1$, in \cite{MR2774306}. 

Area-stationary surfaces of class $C^2$ in $\hh^1$ are well understood.  The regular set of such surfaces is known \cite{MR2165405, MR2435652} to be ruled by characteristic  segments. The singular set was completely described by Cheng, Hwang, Malchiodi and Yang \cite{MR2165405}, who proved that it is the union of isolated points and $C^1$ curves. The area-stationarity condition implies, in addition to have mean curvature zero, that the characteristic curves meet orthogonally the singular curves. This was proved for area-minimizing $t$-graphs by Cheng, Hwang, and Yang \cite{MR2262784}, and for general area-stationary surfaces by Ritor\'e and Rosales \cite{MR2435652}, who used this condition to describe the $C^2$ critical surfaces of the sub-Riemannian area in $\hh^1$ under a volume constraint.

Bernstein type problems for $C^2$ surfaces in $\hh^1$ have also received a special attention. In \cite{MR2165405}, a classification of all complete $C^2$ solutions to the minimal surface equation for $t$-graphs in $\hh^1$ was given. In \cite{MR2435652}, this classification was refined by showing that the only complete area-stationary $t$-graphs are Euclidean non-vertical planes or they are congruent to the hyperbolic paraboloid $t=xy$.  By means of a calibration argument it is also proved in \cite{MR2435652} that they are all area-minimizing. A classification of $C^2$ complete, connected, orientable, area-stationary surfaces with non-empty singular set was obtained in \cite{MR2435652}: the only examples are, modulo congruence, non-vertical Euclidean planes, the hyperbolic paraboloid $t=xy$, and the classical left-handed minimal helicoids. In \cite{MR2405158} and \cite{MR2333095} the Bernstein problem for \emph{intrinsic graphs} in $\hh^1$ was considered.  The notion of intrinsic graph is the one used by Franchi, Serapioni and Serra Cassano in \cite{MR1871966}.  Geometrically, an intrinsic graph is a normal graph over some Euclidean vertical plane with respect to the left invariant Riemannian metric $g$ in $\hh^1$.  A $C^1$ intrinsic graph has empty singular set and so the stationarity condition is equivalent to have the surface ruled by horizontal straight lines. Many examples of $C^2$ complete area-stationary intrinsic graphs different from vertical Euclidean planes were found in \cite{MR2405158}. A remarkable difference with respect to the case of $t$-graphs is the existence of complete $C^2$ area-stationary intrinsic graphs which are not area-minimizing, see \cite{MR2405158}.  In \cite{MR2333095}, Barone Adesi, Serra Cassano and Vittone classified complete $C^2$ area-stationary intrinsic graphs.  Then they computed the second variation formula of the area for such graphs to establish that the only stable ones, i.e., with non-negative second variation, are the Euclidean vertical planes. An interesting calibration argument, also given in \cite{MR2333095}, yields that the vertical planes are in fact area-minimizing surfaces in $\hh^1$.
Later on, using similar techniques, Danielli, Garofalo, Nhieu and Pauls \cite{MR2648078} proved that embedded stable complete surfaces without singular points were vertical planes. The following natural step was to consider complete stable surfaces in $\hh^1$. Finally in \cite{MR2609016}, Hurtado, Ritor\'e and Rosales proved that the only complete, orientable, connected, stable area-stationary surfaces in $\hh^1$ of class $C^2$ are the Euclidean planes and the surfaces congruent to the hyperbolic paraboloid $t=xy$. General first and second variation formulas of the sub-Riemannian area, moving the singular set, in pseudo-hermitian $3$-manifolds have been obtained by the first author in \cite{MR3044134}, also providing a classification \cite[Thm.~10.10]{MR3044134} of complete oriented $C^2$ stable surfaces in the roto-translational group, the group of rigid motions of the Euclidean plane. Classification results for $C^2$ complete oriented CMC stable surfaces without singular points in $3$-dimensional Sasakian sub-Riemannian manifolds have been obtained by Rosales \cite[Corollary~6.9]{MR2875642}. A CMC stable surface is a constant mean curvature surface with non-negative second variation of the area under a volume constraint. The first author has also considered the case of the sub-Riemannian Sol manifold, providing several results concerning complete $C^2$ area-stationary and stable surfaces \cite{MR3259763}.

In recent papers, Cheng, Hwang and Yang \cite{MR2481053} and Cheng, Hwang, Malchiodi and Yang \cite{MR2983199} have considered the case of $t$-graphs in $\hh^1$ of class $C^1$ which are weak solutions of the minimal surface equation. This condition turns out to be equivalent to be a critical point of the sub-Riemannian perimeter under variations by $t$-graphs.  In particular they showed that the regular set of such a graph is foliated by characteristic straight lines \cite[Thm.~A]{MR2481053}. In the minimal case, this result generalizes a previous one by Pauls \cite[Lemma~3.3]{MR2225631} for $H$-minimal surfaces with components of the horizontal Gauss map are in the class $W^{1,1}$. A quite complete description of the singular set of a $t$-graph of class $C^1$ which is a weak solution of the minimal surface equation was given in \cite[Thm.~C]{MR2983199}: it is composed of isolated points and continuous curves and it is locally path connected. Several curves may meet at a singular point. The key tool is the Codazzi type equation
\[
DD''=2\,(D'-1)(D'-2)
\]
satisfied by the sub-Riemannian area element $D=|N_h|/\escpr{N,T}$ along characteristic curves, see Remark~\ref{rem:codazzi}. Under non-degeneracy conditions, the regularity of the characteristic curves can be improved to $C^1$ and an equal angle condition for characteristic curves meeting the same singular (non-degenerate) point can be established, see \cite[Thms.~G, G']{MR2983199}.




In this paper we are interested in the Bernstein problem for intrinsic graphs of class $C^1$. First we need to extend Pauls and Cheng, Hwang and Yang characterization of characteristic curves to arbitrary area-stationary surfaces of class $C^1$. Throughout this paper, an area-stationary surface will be a critical point of the perimeter under any variation induced by a vector field in $\hh^1$ with compact support. After deriving the first variation formula for the sub-Riemannian area with a boundary term in Lemma~\ref{eq:gen1st}, we use Killing fields for the left-invariant Riemannian metric in $\hh^1$ (that preserve the sub-Riemannian perimeter) to obtain formula \eqref{eq:killing} in Lemma~\ref{eq:killing}. Formula \eqref{eq:killing} is similar to the flux formula obtained by Korevaar, Kusner and Solomon for constant mean curvature hypersurfaces in Euclidean space \cite{MR1010168}, and can be thought of as a special geometric version of the classical Noether's Theorem in Calculus of Variations. We remark that a balancing law for $t$-graphs of class $C^1$ was obtained by Cheng, Hwang and Yang \cite[Lemma~2.1]{MR2481053}, and that our arguments allow us to obtain theirs, see Remark~\ref{rem:chy}. The authors tried to use formula \eqref{eq:killing} to extend \cite[Thm.~A]{MR2481053} to arbitrary surfaces of class $C^1$ using different Killing vector fields. Unfortunately, the technique does not seem to work. With a different proof, localizing the first variation formula on characteristic curves, they shall prove in Theorem~\ref{thm:c1foliation}
\begin{quotation}
The regular part of a $C^1$ area-stationary surface $\Sg\subset\hh^1$ is foliated by horizontal straight lines.
\end{quotation}
This result has been extended by the authors to surfaces with prescribed mean curvature in \cite{1501.07246}.

Once we know about the structure of the regular set of an area-stationary surface of class $C^1$, we want to consider the second variation of these surfaces. Let us consider an oriented surface $\Sg\subset\hh^1$ of class $C^2$ with unit normal $N$ and horizontal unit normal $\nu_h$, and let $U$ be a horizontal vector field whose support does not contain singular points of $\Sg$. Then the second variation of the sub-Riemannian area can be computed as in \cite{MR2609016} to obtain
\begin{equation*}
\frac{d^2}{ds^2}\Big|_{s=0}A(\varphi_s(\Sg))=\int_\Sg \big\{Z(f)^2-qf^2\big\}\, (|N_h|\,d\Sg),
\end{equation*}
where $\{\varphi_t\}_{t\in\rr}$ is the flow generated by $U$, $Z$ is the characteristic vector field on $\Sg$, and $f=\escpr{U,\nu_h}$. The function $q$ is defined by 
\begin{equation*}
q:=4\,\bigg(Z\bigg(\frac{\escpr{N,T}}{|N_h|}\bigg)+\,\frac{\escpr{N,T}^2}{|N_h|^2}\bigg).
\end{equation*}
Hence a first task is to extend the second variation formula to a $C^1$ surface is to prove that the function $\escpr{N,T}/|N_h|$ is smooth when restricted to characteristic curves in $\Sg$. This is done by showing in Lemma~\ref{lem:parameterization}(3) that the function $\escpr{N,T}/|N_h|$ restricted to a characteristic straight line satisfies the ordinary differential equation \eqref{eq:codazzi}
\[
u''+6\,u'u+4\,u^3=0,
\]
where the primes indicate derivatives with respect to the arc-length parameter on the characteristic curve. This allows us to check the validity of the second variation formula \eqref{eq:2ndvar} in Theorem~\ref{thm:second}.  It is important to mention that equation~\eqref{eq:codazzi} is equivalent to the equation $DD''=2\,(D'-1)(D'-2)$ on $t$-graphs, see the discussion in Remark~\ref{rem:codazzi}.

Once the validity of the second variation formula \eqref{eq:2ndvar} is established, we define a stable surface as one satisfying inequality
\begin{equation*}
\int_\Sg \big\{Z(f)^2-qf^2\big\}\, (|N_h|\,d\Sg)\ge 0,
\end{equation*}
for any continuous function $f$ with compact support in the regular set of $\Sg$ and smooth in the horizontal direction in $\Sg$. Then we proceed as in the $C^2$ case to parameterize the surface using a seed curve $\Ga(\eps)$, orthogonal to the characteristic curves, and the orthogonal straight lines with vector $Z_{\Ga(\eps)}$. This way we produce a continuous parameterization
\[
(\eps,s)\mapsto \Ga(\eps)+s\,Z_{\Ga(\eps)},
\]
whose main properties are studied in Lemma~\ref{lem:parameterization}: it is smooth in the $s$-direction and almost everywhere smooth in the $\eps$-direction. Using this parameterization and the technical Lemma~\ref{lem:p} we are able to show in  Theorem~\ref{thm:main} 
\begin{quotation}
Let $\Sg\subset\hh^1$ be a complete oriented stable surface of class $C^1$ without singular points. Then $\Sg$ is a vertical plane.
\end{quotation}
As an immediate consequence we shall prove in  Corollary~\ref{cor:bernstein}
\begin{quotation}
Let $\Sg\subset\hh^1$ be a complete locally area-minimizing intrinsic graph of a $C^1$ function. Then $\Sg$ is a vertical plane.
\end{quotation}

We have organized this paper into five Sections. Some preliminary results, terminology and notation are in Section~\ref{sec:preliminaries}. The first variation formula and their consequences appear in Section~\ref{sec:first}, while the second variation is treated in Section~\ref{sec:2ndvariation}. The proof of the Stability Theorem and the Bernstein result are in Section~\ref{sec:second}.

\section{Preliminaries}
\label{sec:preliminaries}

In this section we gather some results to be used in later Sections

\subsection{The Heisenberg group}
\label{subsec:hg}
The \emph{Heisenberg group} $\hh^1$ is the Lie group $\rr^3\equiv\cc\times\rr$, with product
\[
[z,t]*[z',t']:=[z+z',t+t'+\text{Im}(z\overline{z}')], \qquad  [z,t], [z',t']\in\mathbb{C}\times\rr.
\]
Any $p\in\hh^1$ produces a \emph{left translation} defined $l_p(q)=p*q$.  A basis of left invariant vector fields is given by
\begin{equation*}
X:=\frac{\ptl}{\ptl x}+y\,\frac{\ptl}{\ptl t}, \qquad
Y:=\frac{\ptl}{\ptl y}-x\,\frac{\ptl}{\ptl t}, \qquad
T:=\frac{\ptl}{\ptl t}.
\end{equation*}
The \emph{horizontal distribution} $\mathcal{H}$ in $\hh^1$ is the smooth planar distribution generated by $X$ and $Y$.

We denote by $[U,V]$ the Lie bracket of two $C^1$ vector fields $U$ and $V$ on $\hh^1$.  Note that $[X,T]=[Y,T]=0$, while $[X,Y]=-2T$, so that $\mathcal{H}$ is a bracket-generating distribution.  Moreover,$\mathcal{H}$ is nonintegrable by Frobenius theorem.  The vector fields $X$ and $Y$ generate, at every point, the kernel of the (contact) $1$-form $\omega=-y\,dx+x\,dy+dt$.

\subsection{The left invariant metric}
\label{subsec:g}
We shall consider on $\hh^1$ the Riemannian metric $g=\escpr{\cdot\,,\cdot}$ so that $\{X,Y,T\}$ is an orthonormal basis at every point.  The restriction of $g$ to $\mathcal{H}$ coincides with the usual sub-Riemannian metric in $\hh^1$. The orthogonal \emph{horizontal projection} of a tangent vector $U$ onto $\mathcal{H}$ will be denoted by $U_{h}$.  A vector field $U$ is \emph{horizontal} if and only if $U=U_h$.

For any tangent vector $U$ on $\hh^1$ we define $J(U)=D_UT$, where $D$ is the Levi-Civita connection associated to the Riemannian metric $g$.  Then we have $J(X)=Y$, $J(Y)=-X$ and $J(T)=0$, so that $J^2=-\text{Id}$ when restricted to $\mathcal{H}$.  It is also clear 
\begin{equation}
\label{eq:conmute}
\escpr{J(U),V}+\escpr{U,J(V)}=0,
\end{equation}
for any pair of tangent vectors $U$ and $V$.  The involution $J:\mathcal{H}\to\mathcal{H}$ together with the $1$-form $\omega=-y\,dx+x\,dy+dt$, provides a pseudo-hermitian structure on $\hh^1$, see \cite[Sect.~6.4]{MR1874240}.

\subsection{The pseudo-hermitian connection}
\label{subsec:nabla}
In the Riemannian manifold $(\hh^1,g)$, we define the \emph{pseudo-hermitian connection} $\n$ as the unique metric connection, \cite[(I.5.3)]{MR2229062}, with torsion tensor given by
\begin{equation}
\label{def:torsiontensor}
\tor(X,Y):=2\escpr{J(X),Y}T. 
\end{equation}
For a more detailed introduction to $\n$ and its properties see \cite{MR2214654}. Here we only mention that $\n T\equiv 0$ and observe that the difference between $\n$ and the Levi-Civita connection $D$ can be computed using Koszul's formulas for $D$ and $\n$ to obtain
\[
D_XY-\n_XY=\escpr{Y,T}J(X)+\escpr{X,T}J(Y)-\escpr{J(X),Y}T.
\]

\subsection{Horizontal curves and Carnot-Carath\'eodory distance}
Let $\ga:I\to\hh^1$ be a piecewise $C^1$ curve defined on a compact interval $I\sub\rr$.  The \emph{length} of $\ga$ is the usual Riemannian length $L(\ga)=\int_{I}|\dot{\ga}(\eps)|\,d\eps$, where $\dot{\ga}$ is the tangent vector of $\ga$.  A \emph{horizontal curve} $\ga$ in $\hh^1$ is a $C^1$ curve whose tangent vector always lies in the horizontal distribution.  For two given points in $\hh^1$ we can find, by Chow's connectivity theorem \cite[Sect.~1.2.B]{MR1421823}, a horizontal curve joining these points.  The \emph{Carnot-Carath\'eodory distance} $d_{cc}$ between two points in $\hh^1$ is defined as the infimum of the length of horizontal curves joining the given points.  The topology associated to $d_{cc}$ coincides with the usual topology in $\rr^3$, see \cite[Cor.~2.6]{MR1421822}.

\subsection{Geodesics in $(\hh^1,g)$}
\label{subsec:geo}
A \emph{geodesic} in $(\hh^1,g)$ is a $C^2$ curve $\ga$ such that the covariant derivative of the tangent vector field $\dot{\ga}$ vanishes along $\ga$.

Let $\ga(s)=(x(s),y(s),t(s))$ be a horizontal geodesic. Dots will indicate derivatives with respect to $s$.  We write $\dot{\ga}=\dot{x}\,X+\dot{y}\,Y+(\dot{t}-\dot{x}y+x\dot{y})\,T$. Since $\ga$ is a horizontal curve,
\[
\dot{t}=\dot{x}y-x\dot{y}.
\]
Then $\ga$ is a horizontal geodesic in $(\hh^1,g)$ if and only if
\[
\ddot{x}=\ddot{y}=0.
\]
Solving these equations with initial conditions $(x(0),y(0),t(0))=(x_{0},y_{0},t_{0})$ and $(\dot{x}(0),\dot{y}(0))=(A,B)$ we get
\begin{equation}
\label{eq:eqgeo}
\begin{split}
x(s)&=x_{0}+A\,s,
\\
y(s)&=y_{0}+B\,s,
\\
t(s)&=t_{0}+(Ay_{0}-Bx_{0})\,s,
\end{split}
\end{equation}
which is a horizontal straight line.

\subsection{Geometry of surfaces in $\hh^1$}
\label{subsec:surfaces}
Let $\Sg$ be a $C^1$ surface immersed in $\hh^1$.  The \emph{singular set} $\Sg_0$ consists of those points $p\in\Sg$ for which the tangent plane $T_p\Sg$ coincides with $\mathcal{H}_{p}$.  As $\Sg_0$ is closed and has empty interior in $\Sg$, the \emph{regular set} $\Sg\setminus\Sg_0$ of $\Sg$ is open and dense in $\Sg$.  It was proved in \cite[Lemma~1]{MR0343011}, see also \cite[Thm.~1.2]{MR2021034}, that, for a $C^2$ surface, the Hausdorff dimension of $\Sg_{0}$ with respect to the Riemannian distance on $\hh^1$ is less than or equal to one.  In particular, the Riemannian area of $\Sg_{0}$ vanishes.  If $N$ is a unit normal vector to $\Sg$ in $(\hh^1,g)$, then we can describe the singular set as $\Sg_{0}=\{p\in\Sg;N_h(p)=0\}$, where $N_{h}=N-\escpr{N,T}T$.  In the regular part $\Sg\setminus\Sg_0$, we can define the \emph{horizontal Gauss map} $\nu_h$ and the \emph{characteristic vector field} $Z$, by
\begin{equation}
\label{eq:nuh}
\nu_h:=\frac{N_h}{|N_h|}, \qquad Z:=J(\nuh).
\end{equation}
As $Z$ is horizontal and orthogonal to $\nu_h$, we conclude that $Z$ is tangent to $\Sg$.  Hence $Z_{p}$ generates $T_{p}\Sg\cap\mathcal{H}_{p}$. The integral curves of $Z$ in $\Sg\setminus\Sg_0$ will be called $(\!\emph{oriented})$ \emph{characteristic curves} of $\Sg$.  They are both tangent to $\Sg$ and horizontal.  If we define
\begin{equation}
\label{eq:ese}
S:=\escpr{N,T}\,\nu_h-|N_h|\,T,
\end{equation}
then $\{Z_{p},S_{p}\}$ is an orthonormal basis of $T_p\Sg$ whenever $p\in\Sg\setminus\Sg_0$.  Moreover, for any $p\in\Sg\setminus\Sg_{0}$ we have the orthonormal basis of $T_{p}\hh^1$ given by $\{Z_{p},(\nuh)_{p},T_{p} \}$.  From here we deduce the following identities on $\Sg\setminus\Sg_{0}$
\begin{equation}
\label{eq:relations}
|N_{h}|^2+\escpr{N,T}^2=1, \qquad (\nu_{h})^\top=\escpr{N,T}\,S, \qquad
T^\top=-|N_{h}|\,S,
\end{equation}
where $U^\top$ stands for the projection of a vector field $U$ onto
the tangent plane to $\Sg$. A \emph{seed} curve in $\Sg\setminus\Sg_0$ is an integral curve of the vector field $S$. Since $S$ is continuous, existence of seed curves is guarenteed, but not their uniqueness.

Given a $C^1$ immersed surface $\Sg$ with a unit normal vector $N$, we define the \emph{sub-Riemannian area} of $\Sg$ by
\begin{equation}
\label{eq:area}
A(\Sg):=\int_{\Sg}|N_{h}|\,d\Sg,
\end{equation}
where $d\Sg$ is the Riemannian area element on $\Sg$.  If $\Sg$ is a $C^2$ surface bounding a set $\Om$, then $A(\Sg)$ coincides with all the notions of perimeter of $\Om$ and area of $\Sg$ introduced by other authors, see \cite[Prop.~2.14]{MR1871966}, \cite[Thm.~5.1]{MR1865002} and \cite[Cor.~7.7]{MR1871966}.

We shall say that an immersed surface $\Sg\subset\hh^1$ is \emph{complete} if it is complete in $(\hh^1,g)$. The \emph{$t$-graph} of a function $u:\Om\subset \{t=0\}\to\rr$ is the set $\{(x,y,u(x,y)): (x,y)\in\Om\}$. Given a vertical plane $\Pi$, the \emph{intrinsic graph} of the function $u:\Om\subset\Pi\to\rr$ is the Riemannian graph of $u$ over $\Om\subset\Pi$, see \cite{MR2223801}. This means the set $\{\exp_p(u(p)\,N_p):p\in\Om\}$, where $\exp_p$ is the Riemannian exponential map  in $(\hh^1,g)$ at $p$, and $N_p$ is a given unit normal to $\Pi$ with respect to the Riemannian metric $g$.

Given a vector field $U$ in $\hh^1$, its \emph{horizontal tangent projection} is
\[
U_{ht}:=U-\escpr{U,\nu_h}\,\nu_h-\escpr{U,T}\,T.
\]
It is clear that $U_{ht}=\escpr{Z,U}\,Z$.

Given a vector field $U$ in $\hh^1$, we define its \emph{horizontal divergence} on $\Sg\setminus\Sg_0$ by
\begin{equation}
\label{eq:hordiv}
\divv_\Sg^h U:=\escpr{\nabla_Z U,Z},
\end{equation}
where $\nabla$ is the pseudo-hermitian connection. The horizontal divergence can be defined for any vector field smooth along horizontal curves in $\Sg$.

\section{First variation and area-stationary surfaces}
\label{sec:first}

Let $\Sg\subset\hh^1$ be a $C^1$ surface. We shall say $\Sg$ is \emph{area-stationary} if, for any $C^1$ vector field $U$ in $\hh^1$ with compact support such that $\text{supp}(U)\cap\ptl\Sg=\emptyset$, and associated one-parameter group of diffeomorphisms $\{\varphi_s\}_{s\in\rr}$, we have
\[
\frac{d}{ds}\bigg|_{s=0} A(\varphi_s(\Sg))=0,
\]
where $A$ is the sub-Riemannian area in $\hh^1$. As shown in the first result, the first variation is computable for any $C^1$ surface $\Sg$

\begin{lemma}
\label{lem:1st}
Let $\Sg\subset\hh^1$ be a $C^1$ surface, possibly with non-empty boundary, and $U\in C^1$ a vector field with compact support and associated one-parameter group of diffeomorphisms $\{\varphi_s\}_{s\in\rr}$. Then
\begin{equation}
\label{eq:1st-0}
\frac{d}{ds}\bigg|_{s=0} A(\varphi_s(\Sg))=\int_\Sg\big\{-S\escpr{U,T}-2\,\escpr{J(U),S}+|N_h|\,\divv_\Sg^h U\big\}\,d\Sg.
\end{equation}
\end{lemma}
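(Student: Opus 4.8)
The plan is to compute the first variation directly, pulling everything back to the fixed surface $\Sg$. Writing $F_s=\var_s\circ F$ for the varied immersion and recalling that $\var_s$ is the flow of $U$, I would express $A(\var_s(\Sg))=\int_\Sg |N_h|(F_s)\,J_s\,d\Sg$, where $J_s$ is the Riemannian Jacobian of $\var_s|_\Sg$ and $|N_h|(F_s)$ is the sub-Riemannian density of the unit normal $N^s$ to $\var_s(\Sg)$. Differentiating under the integral at $s=0$ splits the integrand into two pieces, namely $\tfrac{d}{ds}\big|_{0}|N_h|+|N_h|\,\divv_\Sg U$, where $\divv_\Sg U=\escpr{D_Z U,Z}+\escpr{D_S U,S}$ is the ordinary tangential (Levi--Civita) divergence, computed in the orthonormal frame $\{Z,S\}$ of $T\Sg$ on the regular set $\Sg\setminus\Sg_0$ (the singular set has vanishing area and $|N_h|=0$ there, so it is negligible). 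No integration by parts is needed, since the claimed formula keeps the term $-S\escpr{U,T}$ inside the integral.

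For the density term I would use $|N_h|^2=1-\escpr{N,T}^2$, so that $\tfrac{d}{ds}|N_h|=-\tfrac{\escpr{N,T}}{|N_h|}\,\tfrac{d}{ds}\escpr{N,T}$. The variation of the unit normal is $D_sN=-\sum_i\escpr{N,D_{e_i}U}\,e_i$ (a tangent vector, derived from $\escpr{N,N}=1$ and the torsion-freeness of $D$), while $D_sT=D_UT=J(U)$ by the very definition of $J$. Evaluating in the frame $\{Z,S\}$ and using $\escpr{Z,T}=0$, $\escpr{S,T}=-|N_h|$, together with $J(N)=|N_h|Z$ and the skew-symmetry \eqref{eq:conmute}, this gives $\tfrac{d}{ds}\escpr{N,T}=|N_h|\big(\escpr{N,D_SU}-\escpr{Z,U}\big)$, hence $\tfrac{d}{ds}|N_h|=-\escpr{N,T}\big(\escpr{N,D_SU}-\escpr{Z,U}\big)$.

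It then remains to assemble the integrand and recognize the target expression using three algebraic identities. First, a short computation from \eqref{eq:ese} and $N=|N_h|\nuh+\escpr{N,T}T$ yields $-\escpr{N,T}\,N+|N_h|\,S=-T$, so the two terms $-\escpr{N,T}\escpr{N,D_SU}+|N_h|\escpr{D_SU,S}$ collapse to $-\escpr{D_SU,T}$; metric compatibility and $D_ST=J(S)$ turn this into $-S\escpr{U,T}+\escpr{J(S),U}$, producing the boundary-type term. Second, from \eqref{eq:relations} one checks $\escpr{J(U),S}=-\escpr{N,T}\escpr{Z,U}$, which combines $\escpr{J(S),U}=-\escpr{J(U),S}$ with the leftover $\escpr{N,T}\escpr{Z,U}$ to give exactly $-2\escpr{J(U),S}$; this factor of $2$ is where the pseudo-hermitian torsion \eqref{def:torsiontensor} enters. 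Finally, the connection-difference formula of Section~\ref{subsec:nabla} shows $\escpr{D_ZU,Z}=\escpr{\n_ZU,Z}=\divv_\Sg^hU$, since $Z$ is horizontal. Putting the pieces together reproduces \eqref{eq:1st-0}. I expect the main obstacle to be not the algebra, which is routine bookkeeping, but the justification of differentiating under the integral sign when $\Sg$ is merely $C^1$: here one exploits that the flow of a $C^1$ field is jointly $C^1$ and that $D\var_s$ solves a linear ODE with continuous coefficients, so both $J_s$ and $|N_h|(F_s)$ are $C^1$ in $s$ with $s$-derivative continuous on $\Sg$, which legitimizes the computation in the absence of any second fundamental form.
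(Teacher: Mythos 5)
Your computation is correct, but it takes a genuinely different route from the paper, whose entire proof of Lemma~\ref{lem:1st} is a citation to Lemma~3.4 of \cite{MR3044134}. There the first variation is organized around tangent frames $E_1,E_2$ invariant under the flow ($[E_i,U]=0$) and the single ``horizontal Jacobian'' $V=\escpr{E_1,T}\,E_2-\escpr{E_2,T}\,E_1$, so that $A(\var_s(\Sg))=\int_\Sg|V(s)|\,d\Sg$ and one differentiates $|V|$ with the pseudo-hermitian connection $\n$ --- the same device that reappears verbatim in the proof of Theorem~\ref{thm:second}. You instead factor the pulled-back area element as $|N_h|(F_s)\,J_s$, differentiate the Riemannian Jacobian and the density separately, and recombine. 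Your algebra checks out: $D_sN=-\sum_i\escpr{N,D_{e_i}U}\,e_i$ and $D_sT=J(U)$, together with $T^\top=-|N_h|\,S$ and $J(N)=|N_h|\,Z$, give $\tfrac{d}{ds}\escpr{N,T}=|N_h|\big(\escpr{N,D_SU}-\escpr{Z,U}\big)$; the identity $-\escpr{N,T}\,N+|N_h|\,S=-T$ collapses the $D_SU$ terms to $-\escpr{D_SU,T}=-S\escpr{U,T}+\escpr{J(S),U}$; and since $J(S)=\escpr{N,T}\,Z$, the two equal copies of $\escpr{N,T}\escpr{Z,U}$ produce $-2\escpr{J(U),S}$ (so the factor $2$ comes from this doubling, not literally from the torsion \eqref{def:torsiontensor}, which your Levi-Civita computation never invokes); finally $\escpr{D_ZU,Z}=\escpr{\n_ZU,Z}=\divv_\Sg^hU$ by the connection-difference formula of Section~\ref{subsec:nabla}, because $Z$ is horizontal and $\escpr{J(Z),Z}=0$. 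What your approach buys is a self-contained, purely classical derivation requiring no external reference; what the paper's approach buys is the object $|V(s)|$, whose higher $s$-derivatives remain computable at $C^1$ regularity and feed directly into the second variation of Section~\ref{sec:2ndvariation}.

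Two caveats, both reparable. First, your parenthetical claim that the singular set of a $C^1$ surface has vanishing Riemannian area is not covered by anything quoted in the paper: Section~\ref{subsec:surfaces} cites the measure-zero statement only for $C^2$ surfaces. For merely $C^1$ surfaces it is a nontrivial theorem of Balogh (J.~Reine Angew.\ Math.\ \textbf{564} (2003)), and you genuinely need it, since the integrand of \eqref{eq:1st-0} involves $S$ and $Z$ and is only defined on $\Sg\setminus\Sg_0$. Second, your final regularity claim is overstated: $s\mapsto|N_h^s|$ is $C^1$ only where $N_h^s\neq0$, so at points of $\Sg_0$ plain differentiation under the integral sign is not available (there $\escpr{N^s,T}^2$ has a critical value $1$ at $s=0$ and $|N_h^s|=\sqrt{1-\escpr{N^s,T}^2}$ can fail to be differentiable). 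The fix is standard: $|N_h^s|\,J_s$ is uniformly Lipschitz in $s$ on compact sets because $N^s$ and $J_s$ are $C^1$ in $s$ with locally uniformly bounded derivatives, so one applies dominated convergence to the difference quotients, whose pointwise limit exists off the null set $\Sg_0$. With these two repairs your proof is complete and, unlike the paper's, self-contained.
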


We remark that formula \eqref{eq:1st-0} 
is valid for any variation, even for those moving the boundary of $\Sg$.

\begin{proof}
Equation \eqref{eq:1st-0} is exactly \cite[Lemma~3.4]{MR3044134} for the case of the Heisenberg group~$\hh^1$.
\end{proof}

The following result will be very useful to obtain geometric properties of an area-stationary surface

\begin{lemma}
\label{eq:gen1st}
Let $\Sg\subset\hh^1$ be an area-stationary surface of class $C^1$ without singular points, and let $U$ any vector field in $\hh^1$ of class $C^1$. Then, for any  subset $\Sg'$ compactly contained in $\Sg$ with piecewise $C^1$ boundary $\ptl\Sg'$,
\begin{equation}
\label{eq:1stweak}
0=\int_{\ptl\Sg'}\escpr{\escpr{U,T}\,S-|N_h|\,U_{ht},\xi}\,d(\ptl\Sg')+
\int_\Sg\big\{-S\escpr{U,T}-2\,\escpr{J(U),S}+|N_h|\,\divv_\Sg^h U\big\}\,d\Sg.
\end{equation}
where $\xi$ is the unit inner conormal to $\ptl\Sg'$.
\end{lemma}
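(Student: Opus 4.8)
The plan is to derive \eqref{eq:1stweak} directly from the first variation formula \eqref{eq:1st-0} together with area-stationarity, by testing with cut-off multiples of $U$ and then localizing the resulting identity on $\Sg'$. Write $\mathcal{F}(U):=-S\escpr{U,T}-2\,\escpr{J(U),S}+\mnh\,\divv_\Sg^h U$ for the integrand of \eqref{eq:1st-0}. The key preliminary observation is that $\mathcal{F}(U)$ depends only on the restriction $U|_\Sg$ and its derivatives along $\Sg$, since $S$ and $Z$ are tangent to $\Sg$ and $J$ is pointwise. Hence, for any $\phi\in C^1(\Sg)$ with compact support contained in $\Sg\setminus\ptl\Sg$, the field $\phi U$ (realized as the restriction of some ambient $\tilde\phi U$ with $\tilde\phi\in C^1_c(\hh^1)$) is an admissible variation field, and area-stationarity together with Lemma~\ref{lem:1st} yields $\int_\Sg\mathcal{F}(\phi U)\,d\Sg=0$.

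Next I would expand $\mathcal{F}(\phi U)$ by the Leibniz rule. Using $\escpr{\phi U,T}=\phi\,\escpr{U,T}$, the linearity of $J$ over functions, and $\divv_\Sg^h(\phi U)=\phi\,\divv_\Sg^h U+(Z\phi)\,\escpr{U,Z}$, one obtains the splitting
\begin{equation*}
\mathcal{F}(\phi U)=\phi\,\mathcal{F}(U)-\big((S\phi)\,\escpr{U,T}-\mnh\,(Z\phi)\,\escpr{U,Z}\big).
\end{equation*}
The crucial point is that the correction term is precisely the tangential derivative of $\phi$ paired with the boundary field appearing in \eqref{eq:1stweak}, namely $W:=\escpr{U,T}\,S-\mnh\,\uht=\escpr{U,T}\,S-\mnh\,\escpr{U,Z}\,Z$. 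Indeed, since $\{Z,S\}$ is an orthonormal frame of $T\Sg$ on $\Sg\setminus\Sg_0$, we have $\escpr{W,S}=\escpr{U,T}$ and $\escpr{W,Z}=-\mnh\,\escpr{U,Z}$, so that with $\n^\Sg\phi=(Z\phi)\,Z+(S\phi)\,S$ the correction equals $\escpr{\n^\Sg\phi,W}$. Therefore the identity $\int_\Sg\mathcal{F}(\phi U)\,d\Sg=0$ becomes $\int_\Sg\phi\,\mathcal{F}(U)\,d\Sg=\int_\Sg\escpr{\n^\Sg\phi,W}\,d\Sg$.

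Finally I would let $\phi$ run through a family $\phi_\eps$ of cut-off functions equal to $1$ on $\Sg'$ and vanishing outside an $\eps$-collar of $\Sg'$, with $\n^\Sg\phi_\eps$ supported in the collar and directed along the conormal. By dominated convergence the left-hand side tends to the area integral of \eqref{eq:1stweak}, while the coarea formula applied to the collapsing collar turns $\int_\Sg\escpr{\n^\Sg\phi_\eps,W}\,d\Sg$ into the flux $\int_{\ptl\Sg'}\escpr{W,\xi}\,d(\ptl\Sg')$; tracking the orientation of the conormal produced by the limiting gradient $\n^\Sg\phi_\eps/|\n^\Sg\phi_\eps|$ reproduces the inner-conormal convention of \eqref{eq:1stweak}. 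The main obstacle is regularity: because $\Sg$ is only of class $C^1$, the unit normal $N$, and hence $W$, $S$, $Z$ and $\mnh$, are merely continuous, so $W$ cannot be differentiated and the classical divergence theorem is unavailable. The entire purpose of the cut-off argument is that every derivative falls on the smooth factor $\phi_\eps$, so that the passage to the limit requires only the continuity and integrability of $W$ together with the coarea formula; the corners of the piecewise $C^1$ boundary $\ptl\Sg'$ constitute a set of vanishing length and do not contribute in the limit.
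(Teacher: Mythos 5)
Your proposal is correct and follows essentially the same route as the paper's own proof: the paper likewise tests stationarity with a cut-off multiple $fU$ of the given field, splits off by the Leibniz rule a correction term of the form $\escpr{\nabla_\Sg f,W}$ with $W$ the tangential field $-\escpr{U,T}\,S+|N_h|\,U_{ht}$, and localizes with cutoffs $f_\eps$ collapsing onto $\Sg'$, using the coarea formula for the boundary flux and dominated convergence for the bulk term. One cosmetic point: your limiting identity reads $\int_{\Sg'}\mathcal{F}(U)\,d\Sg=\int_{\ptl\Sg'}\escpr{\escpr{U,T}\,S-|N_h|\,U_{ht},\xi}\,d(\ptl\Sg')$ with $\xi$ the inner conormal, which agrees exactly with what the paper's proof body derives (their $W$ is the negative of yours), so the discrepant sign of the boundary term in the displayed formula \eqref{eq:1stweak}, and the domain $\Sg$ instead of $\Sg'$ in its second integral, are slips in the paper's statement rather than gaps in your argument.
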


\begin{proof}
Let $U$ be a $C^1$ vector field in $\hh^1$, and $f$ a smooth function in $\hh^1$ with compact support, chosen so that $\text{supp}(f)\cap\ptl\Sg=\emptyset$. Since $\Sg$ is area-stationary, the first variation of the sub-Riemannian area with respect to the flow associated to $fU$ is zero. From \eqref{eq:1st-0} this means
\[
\int_\Sg\{-S\escpr{fU,T}-2\,\escpr{J(fU),S}+|N_h|\,\divv_\Sg^h (fU)\}\,d\Sg=0.
\]
This formula is also valid by approximation for Lipschitz functions on $\Sg$, and can be rewritten as
\begin{equation}
\label{eq:f}
0=\int_\Sg\escpr{W,\nabla_\Sg f}\,d\Sg+\int_\Sg fg\,d\Sg,
\end{equation}
where $W$ is the tangent vector field $-\escpr{U,T}\,S+|N_h|\,\escpr{U,Z}\,Z=-\escpr{U,T}\,S+|N_h|\,U_{ht}$, and $g$ is the continuous function $-S\escpr{U,T}-2\,\escpr{J(U),S}+|N_h|\,\divv_\Sg^h U$.

Take now $\Sg'\subset\Sg$ relatively compact in $\Sg$ with $C^1$ boundary. Consider the sequence of lipschitz functions $f_\eps$ on $\Sg$ so that $f_\eps$ equals $1$ on $\Sg'$, vanishes when the distance to $\Sg'$ is larger than of equal to $\eps$ and is a linear function of the distance to $\Sg'$ in between. A standard argument using the coarea formula for Lipschitz functions shows
\begin{equation}
\label{eq:weakdiv}
\lim_{\eps\to 0}\int_\Sg \escpr{W,\nabla_\Sg f_\eps}\,d\Sg=\int_{\ptl\Sg'} \escpr{W,\xi}\,d(\ptl\Sg'),
\end{equation}
where $\xi$ is the inner unit normal to $\ptl\Sg'$. As the function $g$ is continuous in $\Sg$ and the functions $f_\eps$ are bounded, we can use the Dominated Convergence Theorem to show
\begin{equation}
\label{eq:limitsg'}
\lim_{\eps\to 0}\int_\Sg f_\eps\,g\,d\Sg=\int_{\Sg'} g\,d\Sg,
\end{equation}
since $f_\eps$ approaches the characteristic function of the set $\Sg'$. Taking limits, as $\eps\to 0$, in formula \eqref{eq:f} applied to $f=f_\eps$, and using \eqref{eq:weakdiv} and \eqref{eq:limitsg'} together with the first variation formula \eqref{eq:1st-0}, we obtain \eqref{eq:1stweak}.
\end{proof}

Recall that right-invariant vector fields in $\hh^1$ are Killing fields for the left-invariant Riemannian metric. They are linear combination, with constant coefficients, of the vector fields $X-2yT$, $Y+2xT$ and $T$. They induce one-parameter groups of isometries, for the left-invariant Riemannian metric in $\hh^1$, and are infinitesimal generators for one-parameter groups of contact transformations, see Liberman \cite{MR0119153} and Korányi-Reimann \cite[\S~5.1]{MR1317384}. Hence the associated one-parameter groups of diffeomorphisms preserve the sub-Riemannian perimeter, and we obtain

\begin{lemma}
\label{lem:killing}
Let $\Sg\subset\hh^1$ be a $C^1$ area-stationary surface without singular points. If $U$ is a Killing field and $\Sg'$ is compactly contained in $\Sg$ with $C^1$ boundary $\ptl\Sg'$, then
\begin{equation}
\label{eq:killing}
\int_{\ptl\Sg'}\escpr{\escpr{U,T}\,S-|N_h|\,U_{ht},\xi}=0,
\end{equation}
where $\xi$ is the unit inner conormal to $\ptl\Sg'$.
\end{lemma}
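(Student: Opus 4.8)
The plan is to combine the integration-by-parts identity of Lemma~\ref{eq:gen1st} with the fact, recalled in the paragraph just above, that the one-parameter group generated by a Killing field preserves the sub-Riemannian perimeter. Applying Lemma~\ref{eq:gen1st} to the given Killing field $U$ and the relatively compact piece $\Sg'$ already produces
\[
0=\int_{\ptl\Sg'}\escpr{\escpr{U,T}\,S-|N_h|\,U_{ht},\xi}\,d(\ptl\Sg')+\int_{\Sg'} g\,d\Sg,
\]
where $g:=-S\escpr{U,T}-2\escpr{J(U),S}+|N_h|\,\divv_\Sg^h U$ is the bulk integrand appearing in the first variation formula~\eqref{eq:1st-0}. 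Thus the entire content of the lemma reduces to showing that, for a Killing field, the bulk term $\int_{\Sg'} g\,d\Sg$ vanishes, so that only the boundary integral survives.

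To see this, I would invoke the first variation formula of Lemma~\ref{lem:1st} applied not to all of $\Sg$ but to the piece $\Sg'$ itself; by the remark following that lemma, formula~\eqref{eq:1st-0} is valid even for variations moving the boundary, so
\[
\int_{\Sg'} g\,d\Sg=\frac{d}{ds}\bigg|_{s=0}A(\varphi_s(\Sg')),
\]
where $\{\varphi_s\}$ is the flow of $U$. Since $U$ is Killing, each $\varphi_s$ is an isometry of $(\hh^1,g)$ which is simultaneously a contact transformation; hence it preserves both the Riemannian area element $d\Sg$ and the horizontal modulus $|N_h|$, so $A(\varphi_s(\Sg'))=A(\Sg')$ is constant in $s$. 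Its derivative at $s=0$ is therefore zero, forcing $\int_{\Sg'} g\,d\Sg=0$, and substituting this back into the displayed identity leaves exactly the boundary integral~\eqref{eq:killing}.

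I do not expect a serious obstacle: the argument is a short assembly of Lemma~\ref{eq:gen1st}, Lemma~\ref{lem:1st}, and the invariance $A\circ\varphi_s=A$. The one point that deserves care is the justification that the Killing flow genuinely fixes the sub-Riemannian area $A(\tilde\Sg)=\int_{\tilde\Sg}|N_h|\,d\tilde\Sg$ rather than merely the Riemannian area. This holds because a right-invariant vector field on $\hh^1$ generates isometries that also preserve the contact form $\omega$, equivalently the horizontal distribution $\mathcal{H}$, so the decomposition $N=N_h+\escpr{N,T}T$ and in particular $|N_h|$ are transported without change; the references to Libermann and Korányi--Reimann cited above supply precisely this contact invariance. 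A secondary, purely bookkeeping point is that $\Sg'$ is assumed to have $C^1$ boundary, which is the special case of the piecewise-$C^1$ hypothesis required to invoke Lemma~\ref{eq:gen1st}, so no additional regularity step is needed.
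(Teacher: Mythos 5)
Your proof is correct and is essentially the paper's own argument: the paper states Lemma~\ref{lem:killing} as an immediate consequence of Lemma~\ref{eq:gen1st} together with the observation that the flow of a Killing field (being by left translations, hence isometries of $g$ preserving $\mathcal{H}$, $T$, and thus $|N_h|\,d\Sg$) preserves the sub-Riemannian area, which is precisely the assembly you carry out to kill the bulk term $\int_{\Sg'}g\,d\Sg$. The only point worth noting is that the bulk integral in \eqref{eq:1stweak} is printed over $\Sg$ but, as its derivation via \eqref{eq:limitsg'} shows, is really over $\Sg'$, and you correctly use it in that form.
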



\begin{remark}
\label{rem:chy}
The main consequence of Lemma~\ref{lem:killing} is that the regular part of an area-stationary $t$-graph $\Sg$ is foliated by horizontal straight lines. This result was first proved by Cheng, Hwang and Yang \cite{MR2481053}, see Remark~\ref{rem:chy}. We can use \eqref{eq:killing} to recover their result in the following way: taking as Killing field $U=T$ in Lemma~\ref{lem:killing}, formula \eqref{eq:killing} reads
\[
\int_{\ptl\Sg'}\escpr{S,\xi}=0.
\]
Assume $\Sg$ is the graph of the $C^1$ function $u:\Om\subset\rr^2\to\rr$. Simple computations allow us to write the above formula as
\begin{equation}
\label{eq:divt}
\int_{\ptl\Om'} \escpr{V,\xi_0}\,d(\ptl\Om')=0,
\end{equation}
where $V=(\nabla u+F)/|\nabla u+F|$, $\nabla$ is the gradient in $\rr^2$, $F(x,y)=(-y,x)$, $\Om'\subset\Om$ is relatively compact, and $\xi_0$ is the Euclidean unit normal to $\ptl\Om'$. Using that the Euclidean modulus of $V$ is equal to $1$, the classical Divergence Theorem, equality \eqref{eq:divt}, and a calibration argument, we deduce that the trajectories of the vector field $V^\perp$ locally minimize length. Hence they are Euclidean straight lines, and their horizontal liftings are horizontal geodesics.
\end{remark}

The authors have tried to use formula \eqref{eq:killing} to extend Theorem~A in \cite{MR2481053} to arbitrary surfaces of class $C^1$ using different Killing vector fields. Unfortunately, the technique does not seem to work. A proof using Calculus of Variations arguments is given below.

\begin{theorem}
\label{thm:c1foliation}
The regular part of a $C^1$ area-stationary surface $\Sg\subset\hh^1$ is foliated by horizontal straight lines.
\end{theorem}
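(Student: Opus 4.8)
The plan is to show that every characteristic curve is a horizontal straight line by proving that its tangent direction is constant. Recall that the pseudo-hermitian connection makes $\{X,Y,T\}$ a parallel frame, so $\nabla X=\nabla Y=\nabla T=0$. Along a characteristic curve $\ga$, parameterized by arc length $s$, the unit tangent is $Z=\dga$, which I write as $Z=\cos\theta\,X+\sin\theta\,Y$ for a continuous branch of the angle $\theta(s)$. Differentiating and using parallelism gives $\nabla_Z Z=(Z\theta)\,J(Z)$, so $\ga$ is a horizontal geodesic precisely when $\theta$ is constant; by \eqref{eq:eqgeo} this is exactly the condition for $\ga$ to be a horizontal straight line. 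Thus the whole theorem reduces to proving $\theta'=0$ along each characteristic curve.

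To reach this I would localize the weak first variation identity \eqref{eq:f} of Lemma~\ref{eq:gen1st} on characteristic curves. Fix a regular point, a seed curve $\Ga(\eps)$ through it, and parameterize a neighbourhood by $(\eps,s)\mapsto\Phi(\eps,s)$, where $s\mapsto\Phi(\eps,s)$ is the characteristic curve issuing from $\Ga(\eps)$ at unit speed. Since $\{Z,S\}$ is an orthonormal tangent basis, the Riemannian area element is $d\Sg=\sg\,d\eps\,ds$ with $\sg=\escpr{\ptl_\eps\Phi,S}>0$ near the seed, and $\escpr{Z,\nabla_\Sg\psi}=Z\psi=\ptl_s\psi$. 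Feeding a $C^1$ field $U$ into \eqref{eq:f}, the tangent field appearing there is $W=-\escpr{U,T}\,S+|N_h|\,U_{ht}$, and the change of variables together with an integration by parts in $s$ disintegrates \eqref{eq:f} into a one-dimensional identity along each characteristic curve, equating $\ptl_s$ of $\sg$ times the $Z$-component of $W$ with $\sg$ times the zero-order integrand of \eqref{eq:1st-0}.

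The key step is to run this scheme with the two left-invariant horizontal fields $U=X$ and $U=Y$, which are admissible $C^1$ test fields. For them $\escpr{X,T}=\escpr{Y,T}=0$, so the term $-S\escpr{U,T}$ drops; $\escpr{\nabla_Z X,Z}=\escpr{\nabla_Z Y,Z}=0$ by parallelism; $\escpr{X,Z}=\cos\theta$ and $\escpr{Y,Z}=\sin\theta$; and a short computation with \eqref{eq:conmute} and \eqref{eq:relations} gives $-2\escpr{J(X),S}=2\escpr{N,T}\cos\theta$, $-2\escpr{J(Y),S}=2\escpr{N,T}\sin\theta$. Writing $M:=|N_h|\,\sg>0$ and $C:=2\escpr{N,T}\,\sg$, the localized identities become
\[
\ptl_s(M\cos\theta)=C\cos\theta,\qquad \ptl_s(M\sin\theta)=C\sin\theta,
\]
with a common factor $C$. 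Both right-hand sides are continuous, so $M\cos\theta$ and $M\sin\theta$ are of class $C^1$ in $s$; hence $M^2=(M\cos\theta)^2+(M\sin\theta)^2$ is $C^1$ and positive, so $M$ and $\theta$ are $C^1$ along the curve. Taking $-\sin\theta$ times the first identity plus $\cos\theta$ times the second yields $M\theta'=0$, and since $M>0$ we conclude $\theta'=0$. Thus $\theta$ is constant, $\ga$ is a horizontal straight line, and as the regular points and their characteristic curves fill $\Sg\setminus\Sg_0$, the regular part is foliated by horizontal straight lines.

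The hard part is the rigorous localization at $C^1$ regularity. The characteristic curves are integral curves of the merely continuous field $Z$, so the parameterization $\Phi$ is a priori only continuous in the transverse variable $\eps$ and the density $\sg$ is not obviously smooth; I would need to justify the disintegration of $d\Sg$, the Fubini reduction, and the weak $s$-integration by parts that turn \eqref{eq:f} into the two displayed identities (working near the seed, where $\sg>0$). A related subtlety is that the tempting choice $U=\nuh$, which would exhibit the mean curvature directly, is \emph{not} allowed because $\nuh$ is only continuous; this is precisely why the argument is routed through the fixed smooth fields $X$ and $Y$, whose contributions combine to force $\theta'=0$ without ever differentiating $\nuh$. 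Once the one-dimensional identities are in hand, the conclusion is immediate and robust, being insensitive to the precise value of the common factor $C$.
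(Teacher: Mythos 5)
Your algebraic core checks out: the frame $\{X,Y,T\}$ is indeed $\nabla$-parallel, the terms in \eqref{eq:f} for $U=X,Y$ are exactly $-2\escpr{J(X),S}=2\escpr{N,T}\cos\theta$ and $-2\escpr{J(Y),S}=2\escpr{N,T}\sin\theta$, the bootstrap ($M\cos\theta,M\sin\theta\in C^1$ in $s$ forces $M,\theta\in C^1$ with $M\theta'=0$), and the reduction of straightness to $\theta'=0$ via \eqref{eq:eqgeo} are all correct. Your broad strategy — localizing the first variation identity of Lemma~\ref{eq:gen1st} along characteristic curves — is the same as the paper's, but your cancellation mechanism is genuinely different: the paper varies by intrinsic graphs (essentially $U=\varphi Y$, giving \eqref{eq:firstgraph}) and must kill the zero-order term $2\tilde{\varphi}\tilde{u}_t$ with the divided-difference test function $h\tilde{\varphi}/(t_{\eps+h}-t_\eps)$, whereas you test with the two fixed fields $X$ and $Y$ and let the linear combination $-\sin\theta\,(\cdot)+\cos\theta\,(\cdot)$ cancel the zero-order terms identically. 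If the localization were in hand, your route would be cleaner than the paper's at this step.

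But the localization, which you flag and defer, is a genuine gap and is precisely where the content of the theorem at $C^1$ regularity lies — and your chosen transversal makes it unrepairable as stated. You parameterize by an integral curve $\Ga(\eps)$ of $S$; but $S$ is merely continuous, so its integral curves exist without uniqueness, $\ptl_\eps\Phi$ need not exist at any point, and a foliation by smooth curves with only continuous transverse dependence need not admit an absolutely continuous disintegration $d\Sg=\sg\,d\eps\,ds$ with a pointwise density — so the Fubini reduction and the $\eps$-localization cannot be ``justified near the seed'' by soft arguments. Note also that you cannot borrow Lemma~\ref{lem:parameterization} here: its parameterization $F(\eps,s)=\Ga(\eps)+s\,Z_{\Ga(\eps)}$ and its a.e.\ regularity already presuppose Theorem~\ref{thm:c1foliation}, so that would be circular. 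The paper's resolution is to rotate so that $\Sg$ is locally an intrinsic graph $G_u$ over $\{y=0\}$: the projected characteristic curves then solve $t'=2u(x,t)$, an ODE with $C^1$ right-hand side (this, not the continuity of $Z$, is the decisive observation — see Remark~\ref{rem:Z}), whence $(\xi,\eps)\mapsto(\xi,t_\eps(\xi))$ is a $C^1$ change of variables with Jacobian $\ptl t_\eps/\ptl\eps>0$ satisfying the linear equation \eqref{eq:dteps}. If you replace your $S$-seed transversal by this graph-coordinate flow map, your two-field identities become legitimate distributional equations in $s$ for each $\eps$, and your argument closes — in fact bypassing the paper's divided-difference manipulation.
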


\begin{proof}
Given a point $p\in\Sg\setminus\Sg_0$, there exists a vertical plane $\Pi$ through the origin so that an open neighborhood $B\subset\Sg$ of $p$ is an intrinsic graph over an open subset $\Om\subset\Pi$. Rotating $\Sg$ around the $t$-axis we may assume that $\Pi$ is the plane $y=0$, and that $B$ is the vertical graph $G_u$ of a function $u:\Om\to\rr$ of class $C^1$. This implies
\[
G_u=\{(x,u(x,t),t-xu(x,t)): (x,t)\in\Om\}.
\]
The tangent plane to any point in $G_u$ is generated by the vectors
\begin{align*}
\tfrac{\ptl}{\ptl x}&\mapsto (1,u_x,-u-xu_x)=X+u_xY-2uT,
\\
\tfrac{\ptl}{\ptl t}&\mapsto (0,u_t,1-xu_t)=u_tY+T.
\end{align*}
The ``downward'' pointing unit normal vector is then given by $\widetilde{N}/|\widetilde{N}|$, where
\[
\widetilde{N}=(X+u_xY-2uT)\times (u_tY+T)
=(u_x+2uu_t)\,X-Y+u_tT,
\]
and $\times$ denotes the cross product with respect to the oriented orthonormal basis $\{X,Y,T\}$. The horizontal unit normal is given by $\nuh=\widetilde{\nu}_h/|\widetilde{\nu}_h|$, where
\[
\widetilde{\nu}_h=(u_x+2uu_t)\,X-Y,
\]
and the characteristic direction by $Z=\widetilde{Z}/|\widetilde{Z}|$, where
\[
\widetilde{Z}=J(\widetilde{\nu}_h)=X+(u_x+2uu_t)Y.
\]

Assume now that $\ga(s)=(x(s),t(s))$ is a curve in $\Om$. Then
\[
\Ga(s)=(x(s),u(x(s),t(s)),t(s)-x(s)u(x(s),t(s)))\subset G_u,
\]
and we get
\[
\Ga'(s)=x'\,(X+u_xY-2uT)+t'\,(u_tY+T)=x'X+(x'u_x+t'u_t)Y+(t'-2ux')T.
\]
In particular, horizontal curves in $G_u$ satisfy the ordinary differential equation $t'=2ux'$. Since $u\in C^1(\Om)$, we have uniqueness of characteristic curves through any given point in $G_u$.

The jacobian of the map $(x,t)\mapsto (x,u(x,t),t-xu(x,t))$ can be easily computed and allows us to express the Riemannian area element $d\Sg$ in terms of the Lebesgue measure $dx\,dt$ of the vertical plane
\[
d\Sg=(1+u_t^2+(u_x+2uu_t)^2)^{1/2}dxdt=|\widetilde{N}|\,dxdt.
\]
On the other hand, since 
\[
|N_h|=\frac{(1+(u_x+2uu_t)^2)^{1/2}}{|\widetilde{N}|},
\]
we obtain the following expression for the sub-Riemannian area of the intrinsic graph of the $C^1$ function $u:\Om\to\rr$
\begin{equation}
\label{eq:areagraph}
A(G_u)=\int_\Om (1+(u_x+2uu_t)^2)^{1/2}dx\,dt.
\end{equation}

Choose a $C^1$ function $\varphi$ with compact support in $\Om$ and consider the variation of $G_u$ by the intrinsic graphs $u+s\varphi$. This variation corresponds to moving the graph $G_u$ under the action of the vector field $\varphi Y$, where $\varphi$ is considered as a function in $\hh^1$ by means of the intrinsic orthogonal projection on the plane $\Pi$. The derivative of the sub-Riemannian area is given by
\begin{equation}
\label{eq:firstgraph}
0=\frac{d}{ds}\bigg|_{s=0} A(G_{u+s\varphi})
=\int_\Om \frac{u_x+2uu_t}{(1+(u_x+2uu_t)^2)^{1/2}}\,(\varphi_x+2u\varphi_t+2\varphi u_t)\,dx\,dt.
\end{equation}

Assume the point $p\in G_u$ corresponds to the point $(a,b)$ in the $xt$-plane. The  curve $s\mapsto (s,t(s))$ is (a reparameterization of the projection of) a characteristic curve if and only if the function $t(s)$ satisfies the ordinary differential equation $t'(s)=u(s,t(s))$. For $\eps$ small enough, we consider the solution $t_\eps$ of equation $t_\eps'(s)=2u(s,t_\eps(s))$ with initial condition $t_\eps(a)=b+\eps$, and define $\ga_\eps(s):=(s,t_\eps(s))$, with $\ga=\ga_0$. We may assume that, for small enough $\eps$, the functions $t_\eps$ are defined in the interval $[a-r,a+r]$ for some $r>0$. By Peano's Theorem \cite[Thm.~3.1]{MR0171038}, $t_\eps(s)$ is of class $C^1$ as a function of $\eps$ and $s$. Moreover, the function $\ptl t_\eps/\ptl\eps$ satisfies
\begin{equation}
\label{eq:dteps}
\bigg(\frac{\ptl t_\eps}{\ptl\eps}\bigg)'(s)=2u_t(s,t_\eps(s))\,\bigg(\frac{\ptl t_\eps}{\ptl\eps}\bigg)(s),\qquad \frac{\ptl t_\eps}{\ptl\eps}(a)=1.
\end{equation}
where $'$ is the derivative with respect to the parameter $s$.

We consider the $C^1$ parameterization
\[
F(\xi,\eps):= (\xi,t_\eps(\xi))=(s,t)
\]
near the characteristic curve through $(a,b)$. The jacobian of this parameterization is given by
\[
\bigg|\begin{pmatrix}
1 & t'_\eps
\\
0 & \ptl t_\eps/\ptl \eps
\end{pmatrix}\bigg|=\frac{\ptl t_\eps}{\ptl\eps},
\]
which is positive because of the choice of initial condition for $t_\eps$ and the fact that the curves $\ga_\eps(s)$ foliate a neighborhood of $(a,b)$.  This Jacobian is also differentiable in direction of $\xi$ by \eqref{eq:dteps}. Any function $\varphi$ can be considered as a function of the variables $(\xi,\eps)$ by making $\tilde{\varphi}(\xi,\eps):=\varphi(\xi,t_\eps(\xi))$. Changing variables, and assuming the support of $\varphi$ is contained in a sufficiently small neighborhood of $(a,b)$, we can express the integral \eqref{eq:areagraph} as
\[
\int_{I}\bigg\{\int_{a-r}^{a+r}\frac{\ptl\tilde{u}/\ptl\xi}{(1+(\ptl\tilde{u}/\ptl\xi)^2)^{1/2}}\,\bigg(\frac{\ptl\tilde{\varphi}}{\ptl\xi}+2\tilde{\varphi} \tilde{u}_t\bigg)\,\frac{\ptl t_\eps}{\ptl\eps}\,d\xi\bigg\}\,d\eps,
\]
where $I$ is a small interval containing $0$. Instead of $\tilde{\varphi}$, we can consider the function $\tilde{\varphi} h /( t_{\eps+h}-t_\eps)$, where $h$ is a sufficiently small real parameter. We get that 
\[
\frac{ \ptl}{\ptl\xi}\bigg(\frac{h\, \tilde{\varphi}}{ t_{\eps+h}-t_\eps}\bigg)=\frac{\ptl\tilde{\varphi}}{\ptl\xi}\cdot \frac{h}{t_{\eps+h}-t_\eps}-2\tilde{\varphi}\cdot \frac{\tilde{u}(\xi, \eps+h)-\tilde{u}(\xi,\eps)}{t_{\eps+h}-t_\eps}\cdot \frac{h}{t_{\eps+h}-t_\eps}
\]
tends to 
\[
\frac{\ptl\tilde{\varphi}/\ptl\xi}{\ptl t_\eps/\ptl\eps}-\frac{2\tilde{\varphi}\tilde{u}_t}{\ptl t_\eps/\ptl\eps},
\]
when $h\rightarrow 0$. So using that $G_u$ is area-stationary we have that
\[
\int_{I}\bigg\{\int_{a-r}^{a+r} \frac{h}{t_{\eps+h}-t_\eps}   \frac{\ptl\tilde{u}/\ptl\xi}{(1+(\ptl\tilde{u}/\ptl\xi)^2)^{1/2}}  \,\bigg(\frac{\ptl\tilde{\varphi}}{\ptl\xi}\cdot +2  \tilde{\varphi}\cdot \bigg(\tilde{u}_t-\frac{\tilde{u}(\xi, \eps+h)-\tilde{u}(\xi,\eps)}{t_{\eps+h}-t_\eps}\bigg)\bigg)\,\frac{\ptl t_\eps}{\ptl\eps}\,d\xi\bigg\}\,d\eps
\]
vanishes. Furthermore, letting $h\rightarrow 0$ we conclude
\[
\int_{I}\bigg\{\int_{a-r}^{a+r}\bigg(\frac{\ptl\tilde{u}/\ptl\xi}{(1+(\ptl\tilde{u}/\ptl\xi)^2)^{1/2}}  \,\frac{\ptl\tilde{\varphi}}{\ptl\xi}\bigg)\,d\xi\bigg\}\,d\eps=0.
\]


Let $\eta:\rr\to\rr$ be a positive function with compact support in the interval $I$ and consider the family $\eta_\rho(x):=\rho^{-1}\eta(x/\rho)$. Inserting a test function of the form $\eta_\rho(\eps)\psi(\xi)$, where $\psi$ is a $C^\infty$ function with compact support in $(a-r,a+r)$, making $\rho\to 0$, and using that $G_u$ is area-stationary we obtain
\[
\int_{a-r}^{a+r} \frac{\ptl\tilde{u}/\ptl\xi}{(1+(\ptl\tilde{u}/\ptl\xi)^2)^{1/2}}(0,\xi)\,\psi'(\xi)\,d\xi=0
\]
for any $\psi\in C^\infty_0((a-r,a+r))$. This implies that the quantity
\[
\frac{\ptl\tilde{u}/\ptl\xi}{(1+(\ptl\tilde{u}/\ptl\xi)^2)^{1/2}}(0,\xi)=\frac{u_x+2uu_t}{(1+(u_x+2uu_t)^2)^{1/2}}(s,t(s))
\]
is constant along the curve $\ga(s)$, and so it is $(u_x+2uu_t)$. Hence $Z$ is the restriction of a left-invariant horizontal vector field to the characteristic curve passing through $p$. This implies that the characteristic curve is a horizontal straight line.
\end{proof}

%


\begin{remark}
\label{rem:Z}
From the proof of Theorem~\ref{thm:c1foliation} it is easy to check that there exists a \emph{unique} characteristic curve in a $C^1$ surface $\Sg\subset\hh^1$ passing through any point $p\in\Sg\setminus\Sg_0$, a result already proved in \cite[Thm.~B']{MR2481053}.

Indeed, using a rotation around the $t$-axis we may assume that, near $p$, the surface $\Sg$ is the intrinsic graph of a $C^1$ function $u$ defined on an open set $\Om$ of the vertical plane $\{y=0\}$. The computations in the first part of the proof of Theorem~\ref{thm:c1foliation} imply that (a reparametrization of) the projection of the characteristic curves to $\Om$ satisfy the ordinary differential equation $t'=2ux'$. This is equivalent to the system $t'=2u, x'=1$, whose solutions are integral curves of the $C^1$ vector field
\[
W=\frac{\ptl}{\ptl x}+2u\,\frac{\ptl}{\ptl t}.
\]
This implies local uniqueness of the projection of the integral curves of $Z$ and so local uniqueness of such curves. Global uniqueness follows from an standard connectedness argument.
\end{remark}

If $\Sg\subset\hh^1$ is any immersed oriented $C^1$ surface such that the vectors $Z$ and $\nu_h$ are $C^\infty$ along characteristic curves in the regular part of $\Sg$, we define the \emph{mean curvature} of $\Sg$ at any point $p\in\Sg\setminus\Sg_0$, by
\begin{equation}\label{def:meancurvature}
H(p):=(\divv_\Sg^h\nu_h)(p),
\end{equation}
where $\divv_\Sg^h(\nu_h)(p)$ is defined by $\escpr{\n_Z \nu_h,Z}$ as in \eqref{eq:hordiv}. Theorem \ref{thm:c1foliation} implies that, in an area-stationary surface, $Z$ and $\nu_h$ are $C^\infty$ along characteristic curves in the regular part of $\Sg$.

Under these regularity hypothesis, we can be able to perform the following first variation formula,

\begin{proposition}\label{1var:weakstationary} Let $\Sg\subset\hh^1$ be an oriented ${C}^1$ surface. Assume $\nu_h$ and $Z$ are differentiable in the $Z$-direction. Then the first variation of the area induced by a $C^1$ vector field $U$ such that $\supp(U)\cap\Sg\subset \Sg\setminus\Sg_0$ is given by
\begin{equation*}
\frac{d}{ds}\Big{|}_{s=0} A(\varphi_{s}(\Sg))=-\int_{\Sg} \escpr{U,N}\,H\,d\Sg,
\end{equation*}
where $H$ is defined in \eqref{def:meancurvature}.
\end{proposition}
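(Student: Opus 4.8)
The plan is to start from the general first variation formula of Lemma~\ref{lem:1st} and to rewrite its integrand, on the regular set $\Sg\setminus\Sg_0$, as a term $H\escpr{U,N}$ plus contributions that integrate to zero. Since $\supp(U)\cap\Sg\subset\Sg\setminus\Sg_0$, I may work throughout in the adapted orthonormal frame $\{Z,\nuh,T\}$, which is defined on $\Sg\setminus\Sg_0$ and, by hypothesis, differentiable in the $Z$-direction. Decomposing $U=\escpr{U,Z}\,Z+\escpr{U,\nuh}\,\nuh+\escpr{U,T}\,T$ and using the structure equation $\n_Z\nuh=H\,Z$ (which follows from $H=\escpr{\n_Z\nuh,Z}$ together with $\escpr{\nuh,T}=\escpr{Z,T}=0$, metricity, and $\n T\equiv 0$, so that only differentiability of $\nuh$ along $Z$ is needed), the relations $J(Z)=-\nuh$, $J(\nuh)=Z$, and the expression \eqref{eq:ese} for $S$, I would expand the integrand of \eqref{eq:1st-0} as
\begin{equation*}
-S\escpr{U,T}-2\,\escpr{J(U),S}+\mnh\,\divv_\Sg^h U=\mnh\,Z\escpr{U,Z}+2\,\escpr{N,T}\,\escpr{U,Z}-S\escpr{U,T}+\mnh\,H\,\escpr{U,\nuh}.
\end{equation*}

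The two terms carrying $\escpr{U,Z}$ are precisely the integrand produced by Lemma~\ref{lem:1st} for the \emph{tangential} field $\escpr{U,Z}\,Z$. Because the characteristic curves of $\Sg$ are unique (Remark~\ref{rem:Z}), the flow of $\escpr{U,Z}\,Z$ maps $\Sg$ into itself, so this is an area-preserving variation and its contribution integrates to zero; equivalently, $\int_\Sg(\mnh\,Z\escpr{U,Z}+2\,\escpr{N,T}\escpr{U,Z})\,d\Sg=0$ by integration by parts along characteristic curves, which uses only the $s$-smoothness of the area density. Either way, the first variation reduces to
\begin{equation*}
\frac{d}{ds}\Big|_{s=0}A(\varphi_s(\Sg))=\int_\Sg\big\{-S\escpr{U,T}+\mnh\,H\,\escpr{U,\nuh}\big\}\,d\Sg.
\end{equation*}

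It remains to integrate the term $-S\escpr{U,T}$ by parts. The relevant pointwise identity is $\divv_\Sg S=\escpr{N,T}\,H$, and this uses only $Z$-direction data: indeed $\escpr{D_S S,S}=\tfrac12\,S\escpr{S,S}=0$, while $\escpr{D_Z S,Z}=\escpr{N,T}\,\escpr{D_Z\nuh,Z}=\escpr{N,T}\,H$ after converting the Levi-Civita connection $D$ into $\n$ by the difference formula of Section~\ref{subsec:nabla}. Granting the integration by parts $\int_\Sg(-S\escpr{U,T})\,d\Sg=\int_\Sg\escpr{U,T}\,\escpr{N,T}\,H\,d\Sg$, the remaining integrand becomes $(\escpr{N,T}\escpr{U,T}+\mnh\escpr{U,\nuh})\,H$, which, since $N=\mnh\,\nuh+\escpr{N,T}\,T$, equals exactly $\escpr{U,N}\,H$; this produces the asserted formula (the overall sign being the one fixed by the orientation convention).

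The main obstacle is precisely this last integration by parts transverse to the characteristic curves. One would like to dispatch $-S\escpr{U,T}$ exactly as the $\escpr{U,Z}\,Z$-term, by viewing the tangent field $\escpr{U,T}\,S$ as the generator of an area-preserving variation; but the integral curves of $S$ — the seed curves — are only continuous and are \emph{not} unique, so neither the divergence theorem for $\escpr{U,T}\,S$ nor Lemma~\ref{lem:1st} applied to $\escpr{U,T}\,S$ is available off the shelf. I would instead pass to the parametrization $(\eps,s)\mapsto\Ga(\eps)+s\,Z_{\Ga(\eps)}$ of Lemma~\ref{lem:parameterization}, in which $d\Sg$ has a density that is smooth in $s$ and almost everywhere smooth, with absolute continuity, in $\eps$. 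In these coordinates $\divv_\Sg(\escpr{U,T}\,S)\,d\Sg$ splits into an $s$-derivative, whose integral vanishes by the fundamental theorem of calculus, and an $\eps$-derivative, whose integral vanishes by the absolute continuity in $\eps$ together with the compact support of $U$ — the same mechanism used in the proof of Lemma~\ref{eq:gen1st}. Carrying this out carefully, with due attention to the measure-zero set of exceptional $\eps$, is the only delicate point of the argument.
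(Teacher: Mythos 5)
Your pointwise frame computation is correct and coincides with the paper's own splitting of \eqref{eq:1st-0} into $\escpr{U,\nu_h}$-, $\escpr{U,Z}$- and $\escpr{U,T}$-parts, but your route to the two integral identities diverges from the paper's, and it has a genuine gap exactly at the point you call ``the only delicate point''. The paper never integrates by parts transversally to the characteristic curves on $\Sg$: it establishes all three integral identities (the transverse one, $\int_\Sg S(\escpr{U,T})\,d\Sg=\int_\Sg \escpr{N,T}\,\escpr{U,T}\,\escpr{\n_Z\nu_h,Z}\,d\Sg$, and the vanishing of the $\escpr{U,Z}$-terms) by approximating $\Sg$ with a family of smooth surfaces $\Sg_j$ as in \cite[Remark~6.1]{MR3044134}, for which $N_j$, $(\nu_h)_j$, $Z_j$, $S_j$ and the $Z_j$-derivatives of $Z_j$ and $(\nu_h)_j$ converge on compact subsets, and then passing to the limit in the smooth first-variation identities of \cite[Proposition~6.3]{MR3044134}. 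Your substitute, the parametrization $F(\eps,s)=\Ga(\eps)+s\,Z_{\Ga(\eps)}$ of Lemma~\ref{lem:parameterization}, fails twice. First, it is out of scope: that lemma is stated for \emph{complete area-stationary} surfaces without singular points, and its parametrization by straight lines presupposes Theorem~\ref{thm:c1foliation}; Proposition~\ref{1var:weakstationary} concerns an arbitrary oriented $C^1$ surface with $\nu_h$, $Z$ differentiable along $Z$ --- in particular surfaces with $H\not\equiv 0$, whose characteristic curves are not straight lines --- so no such parametrization, let alone its Jacobian formula, is available; constructing and controlling a characteristic-flow parametrization in that generality is precisely the hard analytic content you would be assuming. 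Second, even where Lemma~\ref{lem:parameterization} does apply, part (1) gives only that $\eps\mapsto Z_{\Ga(\eps)}$ is smooth for \emph{almost every} $\eps$ (it is obtained by differentiating a monotone reparametrization), and a.e.\ differentiability does not yield the absolute continuity in $\eps$ of the density $\escpr{V_\eps(s),T}$ that your transverse integration by parts needs: Cantor-function behavior is compatible with a.e.\ differentiability, and then the $\eps$-integral of the derivative does not vanish against compactly supported data. You assert this absolute continuity; neither you nor the paper proves it. Nor does the appeal to ``the same mechanism as Lemma~\ref{eq:gen1st}'' help: that proof never differentiates anything in the transverse direction --- its boundary term comes from Lipschitz cutoffs, the coarea formula, and the hypothesis that $\Sg$ is area-stationary, a hypothesis the proposition does not make.

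A secondary problem is your treatment of the tangential term. The field $\escpr{U,Z}\,Z$ is merely continuous on a $C^1$ surface ($Z$ is only continuous in directions transverse to itself), so Lemma~\ref{lem:1st}, stated for $C^1$ vector fields on $\hh^1$, does not apply to it, and the inference ``tangential, hence area-preserving, hence the integrand of \eqref{eq:1st-0} for this field integrates to zero'' silently assumes the validity of \eqref{eq:1st-0} for a field outside that class. The correct tool is the weighted divergence identity $\int_\Sg\big(Z(h)+2\,\tfrac{\escpr{N,T}}{\mnh}\,h\big)\,\mnh\,d\Sg=0$ of Lemma~\ref{lem:divergence} --- but note that in the paper this identity is itself proved by the same smooth-approximation argument, not by a direct fundamental-theorem computation along characteristic curves. (Your final sign $+\int_\Sg\escpr{U,N}\,H\,d\Sg$ versus the stated $-\int_\Sg\escpr{U,N}\,H\,d\Sg$ reflects a convention tension already visible between the paper's claimed identities and the formula $\int_\Sg(S(h)+h\,\escpr{N,T}H)\,d\Sg=0$ used in Lemma~\ref{lem:approx}; that is not a defect of your algebra, but ``fixed by the orientation convention'' should be replaced by an explicit sign convention for $H$.)
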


\begin{proof} We decompose $U$ as $U=\escpr{U,\nu_h}\nu_h+\escpr{U,Z}Z+\escpr{U,T}T$, and observe that we can split the first variation \eqref{eq:1st-0} as
\begin{equation*}
\begin{split}
\frac{d}{ds}\Big{|}_{s=0} A(\varphi_{s}(\Sg))=&-\int_\Sg S(\escpr{U,T}) \,  d\Sg    + \int_\Sg   \mnh \divv_\Sg^h (\escpr{U,\nu_h}\nu_h)\,d\Sg\\
& +\int_\Sg \{  -2\escpr{U,Z}\escpr{J(Z),S}+\mnh \divv_\Sg^h (\escpr{U,Z}Z) \}\, d\Sg. 
\end{split}
\end{equation*}
We observe that all terms in the previous equation are well defined, as $U\in C^1$ and $Z$ and $\nu_h$ are $C^1$ in the $Z$-direction. 
We claim that the following integral equalities hold
\begin{align*}
 \int_\Sg   \mnh \divv_\Sg^h (\escpr{U,\nu_h}\nu_h)\,d\Sg&=\int_{\Sg}|N_{h}|\escpr{U,\nu_h}\escpr{\n_{Z}\nu_h,Z} d\Sg,
\\
\int_{\Sg}S(\escpr{U,T})\,d\Sg&=\int_{\Sg}g(N,T)\escpr{U,T}\escpr{\n_{Z}\nu_h,Z} d\Sg
\\
\int_\Sg \{  -2\escpr{U,Z}\escpr{J(Z),S}&+\mnh \divv_\Sg^h (\escpr{U,Z}Z) \}\, d\Sg=0.
\end{align*}
Indeed we can argue as in the proof of \cite[Proposition 6.3]{MR3044134} since, by \cite[Remark 6.1]{MR3044134}, we can approximate $\Sg$ by a family of smooth  surfaces $\Sg_j$ such that $N_j$ converges to $N$, $(\nu_h)_j$ converges to $\nu_h$ and $\{Z_j, S_j\}$ converge to $\{Z,S\}$, in any compact subset of $\Sg$, where the subscript $j$ denotes the vectors in the surface $\Sg_j$. Furthermore, we also have the convergence of the derivatives in the $Z_j$-direction of $Z_j$ and $(\nu_h)_j$ to the derivatives in the $Z$-direction of $Z$ and $\nu_h$ a on compact subsets of $\Sg$. In this way we can easily conclude the statement.
\end{proof}

\begin{remark} Proposition \ref{1var:weakstationary} and the fact that characteristic curves are horizontal straight lines imply that the mean curvature $H$ vanishes in any regular point of an area-stationary surface $\Sg$.
\end{remark}

The following Lemma will be needed in the next section.

\begin{lemma}[Divergence Theorem]
\label{lem:divergence}
Let $\Sg\subset\hh^1$ be an immersed oriented $C^1$ surface and $f,g \in C^0(\Sg\setminus\Sg_0)$ with compact support such that $f$ is differentiable in the horizontal direction and $g$ twice differentiable in the horizontal direction. Assume $Z$ and $\nu_h$ are also smooth in the horizontal direction. Then we have
\begin{equation}
\label{eq:div}
\int_\Sg \big\{Z(f)\,Z(g)+f\,Z(Z(g))+2\,\frac{\escpr{N,T}}{|N_h|}\,f\,Z(g)\big\}\,(|N_h|d\Sg)=0.
\end{equation}
\end{lemma}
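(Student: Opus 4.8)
The plan is to recognise the integral in \eqref{eq:div} as a horizontal divergence plus a zeroth-order correction, and then to read it off from the first variation formula \eqref{eq:1st-0} applied to a \emph{tangential} vector field. Set $W:=f\,Z(g)\,Z$. Since $W$ is horizontal and tangent to $\Sg$ we have $\escpr{W,Z}=f\,Z(g)$, and because $\n$ is metric with $\escpr{Z,Z}=1$ we get $\escpr{\n_Z Z,Z}=\tfrac12\,Z\escpr{Z,Z}=0$; hence
\[
\divv_\Sg^h W=\escpr{\n_Z\big(f\,Z(g)\,Z\big),Z}=Z(f)\,Z(g)+f\,Z(Z(g)).
\]
Multiplying the bracket in \eqref{eq:div} by $|N_h|$ identifies the integrand, as a density against $d\Sg$, with $|N_h|\,\divv_\Sg^h W+2\,\escpr{N,T}\,\escpr{W,Z}$, so \eqref{eq:div} is equivalent to
\[
\int_\Sg\big\{|N_h|\,\divv_\Sg^h W+2\,\escpr{N,T}\,\escpr{W,Z}\big\}\,d\Sg=0.
\]

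To obtain this identity I would evaluate \eqref{eq:1st-0} on $U=W$. As $W$ is tangent to $\Sg$, its flow maps $\Sg$ onto itself, so the area is constant and the left-hand side of \eqref{eq:1st-0} vanishes; moreover the right-hand side depends only on $W$ along $\Sg$ and its tangential derivatives, so the ambient extension of $W$ is immaterial. The three terms simplify as follows: $\escpr{W,T}=0$ because $W$ is horizontal, whence $S\escpr{W,T}=0$; using $J(Z)=-\nu_h$ together with $\escpr{\nu_h,S}=\escpr{N,T}$ from \eqref{eq:ese} gives $\escpr{J(W),S}=-\escpr{N,T}\,\escpr{W,Z}$; and $|N_h|\,\divv_\Sg^h W$ is unchanged. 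Collecting these reproduces the displayed identity, and substituting $\escpr{W,Z}=f\,Z(g)$ and the formula for $\divv_\Sg^h W$ yields \eqref{eq:div}.

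The main obstacle is regularity: $f$, $g$, $Z$ and $\nu_h$ are only smooth along characteristic curves, so $W$ is not a $C^1$ vector field on $\hh^1$ and Lemma~\ref{lem:1st} cannot be applied verbatim. I would remove this difficulty exactly as in Proposition~\ref{1var:weakstationary}, approximating $\Sg$ by smooth surfaces $\Sg_j$ with $N_j\to N$, $(\nu_h)_j\to\nu_h$, $\{Z_j,S_j\}\to\{Z,S\}$ and with convergence of the $Z_j$-derivatives of $(\nu_h)_j$ and $Z_j$ on compact subsets, via \cite[Remark~6.1]{MR3044134}, while regularising $f,g$ in the transverse direction and controlling their $Z$-derivatives. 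On each smooth approximant the tangential-flow argument above is rigorous, and passing to the limit is legitimate because every term of the boundary-free identity involves only the converging quantities and their $Z$-derivatives. A self-contained alternative, using only the hypothesised horizontal smoothness, is to disintegrate $|N_h|\,d\Sg$ along the characteristic foliation and integrate by parts purely in the arc-length parameter $s$ (where $Z=\ptl/\ptl s$): writing $|N_h|\,d\Sg=a\,ds\,d\eps$, the density $a$ satisfies $\ptl_s\log a=2\,\escpr{N,T}/|N_h|$, which is precisely what converts $\int_\Sg Z(\phi)\,|N_h|\,d\Sg$ into $-\int_\Sg 2\,\escpr{N,T}\,\phi\,d\Sg$ for $\phi=\escpr{W,Z}$, recovering the boundary-free identity.
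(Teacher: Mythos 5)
Your proposal is correct, but it reaches \eqref{eq:div} by a genuinely different route from the paper's. The paper computes directly the Riemannian tangential divergence $\divv_\Sg(\mnh\,f\,Z(g)\,Z)$ on a $C^2$ surface, using the structure formulas of \cite[Lemmas~3.1 and~3.3]{MR3044134} to show it equals $\mnh\,Z(f)Z(g)+\mnh\,f\,Z(Z(g))+2\,\escpr{N,T}\,f\,Z(g)$, and then concludes by integrating this divergence of a compactly supported tangent field; you instead package the same cancellations into a Noether-type observation, evaluating the first variation formula \eqref{eq:1st-0} on (an ambient $C^1$ extension of) the tangential field $W=f\,Z(g)\,Z$, whose flow preserves $\Sg$ so that the left-hand side vanishes. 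Your pointwise identifications are all correct: $\escpr{W,T}=0$ kills the $S$-term, $J(Z)=-\nu_h$ together with $\escpr{\nu_h,S}=\escpr{N,T}$ from \eqref{eq:ese} turns $-2\,\escpr{J(W),S}$ into $2\,\escpr{N,T}\,f\,Z(g)$, and $\escpr{\n_ZZ,Z}=0$ gives $\divv_\Sg^h W=Z(f)Z(g)+f\,Z(Z(g))$; your remark that the right-hand side of \eqref{eq:1st-0} only sees $W$ along $\Sg$ and its tangential derivatives is also right, so the extension is immaterial. What each approach buys: yours is conceptually lighter (no appeal to the derivative formulas for $\mnh$ and $\escpr{N,T}$) and makes transparent that neither stationarity of $\Sg$ nor any mean-curvature term enters; the paper's explicit computation produces the intermediate identities that are reused elsewhere (e.g. in Lemma~\ref{lem:approx} and in the integration by parts inside Theorem~\ref{thm:second}). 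Both arguments face the identical regularity obstruction and resolve it the same way, by the approximation scheme of \cite[Remark~6.1 and Proposition~6.3]{MR3044134}, and the paper gives no more detail there than you do. One caveat: your ``self-contained alternative'' via disintegration is not available under the hypotheses of Lemma~\ref{lem:divergence} alone. The parameterization $F(\eps,s)=\Ga(\eps)+s\,Z_{\Ga(\eps)}$ and the identity $\ptl_s\log a=2\,\escpr{N,T}/\mnh$ (which is indeed $p'/p$ for the polynomial $p$ of Lemma~\ref{lem:p}) presuppose that the characteristic curves are straight lines, i.e. completeness and area-stationarity via Theorem~\ref{thm:c1foliation}, whereas the lemma is stated for arbitrary immersed oriented $C^1$ surfaces; so the flow argument should stand as the proof, with the disintegration at most a remark valid in the stationary complete case.
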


\begin{proof} When $\Sg$ is $C^2$, it makes sense to compute from \cite[Lemma~3.1 and Lemma~3.3]{MR3044134}:
\begin{align*}
\divv_\Sg (\mnh f Z(g)Z)&= Z(\mnh) f Z(g)+\mnh Z(f)Z(g)+\mnh f Z(Z(g)) +2\nt\mnh^2 f Z(g)\\
&\qquad+2\nt^3fZ(g)+\ntnh Z(\nt)fZ(g)\\
&=\mnh Z(f)Z(g)+\mnh f Z(Z(g)) +2\nt f Z(g).
\end{align*}
The general case follows by approximation as in the proof of \cite[Proposition 6.3]{MR3044134}.
\end{proof}

\section{The second variation formula for area-stationary surfaces}\label{sec:2ndvariation}

In this Section, we shall prove the following second variation formula for area-stationary $C^1$ surface

\begin{theorem}
\label{thm:second}
Let $\Sg\subset\hh^1$ be a complete oriented area-stationary surface of class $C^1$, $U$ a horizontal $C^2$ vector field with compact support in $\hh^1$ and $\{\varphi_s\}_{s\in\rr}$ its associated one-parameter group of diffeomorphisms.
Then the second derivative of the sub-Riemannian area of the variation induced by $U$ is given by
\begin{equation}
\label{eq:2ndvar}
\frac{d^2}{ds^2}\Big|_{s=0}A(\varphi_s(\Sg))=\int_\Sg \big\{Z(f)^2-qf^2\big\}\, (|N_h|\,d\Sg),
\end{equation}
where $f=\escpr{U,\nu_h}$ and 
\begin{equation}
\label{eq:q}
q:=4\,\bigg(Z\bigg(\frac{\escpr{N,T}}{|N_h|}\bigg)+\frac{\escpr{N,T}^2}{|N_h|^2}\bigg).
\end{equation}
\end{theorem}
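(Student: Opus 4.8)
The plan is to reduce the $C^1$ statement to the smooth case, where formula~\eqref{eq:2ndvar} is supplied by the explicit second-variation computation of \cite{MR2609016}, and then to control the only genuinely delicate ingredient, the term $Z(\escpr{N,T}/|N_h|)$ buried inside $q$. Since $U$ is horizontal we have $\escpr{U,T}=0$ and $\escpr{U,N}=f\,|N_h|$ with $f=\escpr{U,\nu_h}$; because $\Sg$ is area-stationary its mean curvature vanishes on $\Sg\setminus\Sg_0$ (Proposition~\ref{1var:weakstationary} and the subsequent remark), so, exactly as in \cite{MR2609016}, the second variation will depend only on the normal datum $f$. First I would set $A(s)=A(\varphi_s(\Sg))$, record $A'(0)=0$, and isolate the structure of the raw second-variation integrand. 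The decisive structural feature to notice is that, after using area-stationarity, this integrand involves \emph{only} derivatives in the characteristic direction $Z$ (namely $Z(f)$ and the $Z$-derivative of $\escpr{N,T}/|N_h|$), and no derivatives in the transverse direction $S$; this is precisely what makes $C^1$ regularity enough.

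The first point to settle is that every integrand in~\eqref{eq:2ndvar} is well defined on the $C^1$ surface. The factors $Z(f)$, $f^2$ and $|N_h|\,d\Sg$ cause no trouble, as $U\in C^2$ and $|N_h|$ is continuous; the problem is $q$, which contains $Z(\escpr{N,T}/|N_h|)$. On a bare $C^1$ surface $\escpr{N,T}/|N_h|$ is merely continuous, so this $Z$-derivative need not exist. Here area-stationarity is indispensable: by Theorem~\ref{thm:c1foliation} the regular set is foliated by horizontal straight lines, so $Z$ and $\nu_h$ are $C^\infty$ along characteristic curves, and by Lemma~\ref{lem:parameterization}(3) the function $u=\escpr{N,T}/|N_h|$ satisfies along each such line the ordinary differential equation~\eqref{eq:codazzi}, $u''+6\,u'u+4\,u^3=0$. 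As this equation has smooth (polynomial) right-hand side, $u$ is $C^\infty$ along every characteristic curve; hence $Z(u)$ exists and is continuous on $\Sg\setminus\Sg_0$, and $q$ is a well-defined continuous function.

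With $q$ under control, I would carry the smooth computation over to $\Sg$ through the approximation scheme already used for Proposition~\ref{1var:weakstationary} and Lemma~\ref{lem:divergence}: approximate $\Sg$ by smooth surfaces $\Sg_j$ as in \cite[Remark~6.1]{MR3044134}, with $N_j\to N$, $(\nu_h)_j\to\nu_h$, $\{Z_j,S_j\}\to\{Z,S\}$, and with the $Z_j$-derivatives of $Z_j$ and $(\nu_h)_j$ converging to those of $Z$ and $\nu_h$, uniformly on the compact support of $U$. Because the final integrand only feels $Z$-direction derivatives, these are exactly the convergences needed. For each smooth $\Sg_j$ the second variation is given by \cite{MR2609016}; since $U$ has compact support, all integrals live on a fixed compact region, and the Dominated Convergence Theorem passes the formula to the limit. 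The Divergence Theorem of Lemma~\ref{lem:divergence} is what recasts the raw second-variation integrand into the symmetric final form $Z(f)^2-qf^2$.

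The main obstacle is exactly the term $Z(\escpr{N,T}/|N_h|)$: a naive second-variation computation would require second derivatives of the surface in all directions, which a $C^1$ surface does not possess, and this quantity would be meaningless. The resolution, and the heart of the argument, is that area-stationarity forces the surface to be ruled by horizontal straight lines, so the only derivatives that actually enter are taken \emph{along} those lines, where the Codazzi-type ODE~\eqref{eq:codazzi} provides full smoothness. A secondary technical point is to check that the approximants' failure to be area-stationary (their nonzero mean curvatures $H_j$) contributes nothing in the limit; this holds because $H_j\to H=0$ on $\supp(U)$ and the associated terms carry bounded convergent factors.
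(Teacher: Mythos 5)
Your preliminary step is exactly the paper's: area-stationarity gives the ruling by horizontal lines (Theorem~\ref{thm:c1foliation}), Lemma~\ref{lem:parameterization}(3) shows $u=\escpr{N,T}/|N_h|$ solves the Codazzi ODE \eqref{eq:codazzi} along each characteristic line, so $q$ is well defined and continuous; and you correctly anticipate that the non-stationarity of smooth approximants produces only terms with a factor $H_j$ times bounded cofactors. But your main reduction has a genuine gap. The Dominated Convergence Theorem controls the right-hand sides, i.e. $\int_{\Sg_j}\{Z_j(f_j)^2-q_jf_j^2\}\,|N_{h,j}|\,d\Sg_j\to\int_\Sg\{Z(f)^2-qf^2\}\,(\mnh\,d\Sg)$ (granted the uniform convergence of $Z_j(\escpr{N_j,T}/|N_{h,j}|)$ from \cite[Lemma~6.2]{MR3044134}); it says nothing about the left-hand sides. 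You still need $\frac{d^2}{ds^2}\big|_{s=0}A(\varphi_s(\Sg_j))\to\frac{d^2}{ds^2}\big|_{s=0}A(\varphi_s(\Sg))$, an interchange of $d^2/ds^2$ with $j\to\infty$, and, logically prior to that, the very \emph{existence} of the second derivative for the $C^1$ surface — which is part of the statement being proved and which your scheme tacitly presupposes. The standard repair (uniform convergence of $A_j''(s)$ on an $s$-interval plus pointwise convergence of $A_j(s)$) is not available as stated: for $s\neq 0$ the flowed surface $\varphi_s(\Sg)$ is a $C^1$ surface that is no longer area-stationary, so the foliation/Codazzi structure is absent there, $q$ is not even known to be defined on it, and the approximation of \cite[Remark~6.1]{MR3044134} provides no uniform second-order control transported along the flow.

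The paper avoids this entirely by never differentiating the areas of the approximants. It computes $\frac{d^2}{ds^2}\big|_{s=0}A(\varphi_s(\Sg))$ directly on $\Sg$: extend $Z_p,S_p$ to flow-invariant fields $E_1,E_2$ with $[E_i,U]=0$ along the flow lines of the $C^2$ field $U$, write $A(\varphi_s(\Sg))=\int_\Sg|V(s)|\,d\Sg$ with the horizontal Jacobian $V=\escpr{E_1,T}\,E_2-\escpr{E_2,T}\,E_1$, and differentiate under the integral sign, so that all $s$-derivatives fall on $|V(s)|$ along flow lines and only $C^1$ regularity of $\Sg$ is required. Then, at $s=0$, the $\nabla_UU$-terms vanish by applying the first variation formula \eqref{eq:1st-0} to the horizontal field $\nabla_UU$ (area-stationarity), two terms are absorbed by the integration by parts of Lemma~\ref{lem:divergence}, and approximation by smooth surfaces is invoked only once, for the fixed-time spatial identity of Lemma~\ref{lem:approx} converting the transverse terms $\escpr{\nabla_SU,Z}$ and $\escpr{J(U),\nabla_SU}$ into $Z$-derivatives, with the residual term $2\int_\Sg \ntnh Hfg\,(\mnh\,d\Sg)$ vanishing because $H_j\to 0$ uniformly on $\supp(U)$. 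To salvage your global-approximation route you would first have to prove, by some direct device like this Jacobian computation, that $s\mapsto A(\varphi_s(\Sg))$ is twice differentiable at $0$ — at which point you have essentially reconstructed the paper's proof.
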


Following the classical terminology, we shall say that a complete oriented area-stationary surface $\Sg\subset\hh^1$ is \emph{stable} if
\[
\int_\Sg \big\{Z(f)^2-qf^2\big\}\, (|N_h|\,d\Sg)\ge 0
\]
holds for any continuous function $f$ on $\Sg$ with compact support such that $Z(f)$ exits and is continuous. The stability condition means that the second derivative of the sub-Riemannian perimeter is non-negative for the given variations.

For a $C^1$ surface $\Sg\subset\hh^1$ is not guaranteed that $q$ is well defined. We need to show first that the quantity $u=\escpr{N,T}/|N_h|$ is smooth along horizontal lines.

\begin{lemma}
\label{lem:uab}
Given $a$, $b\in\rr$, the only solution of equation
\begin{equation}
\label{eq:codazzi}
u''+6\,u'u+4\,u^3=0,
\end{equation} 
about the origin with initial~conditions $u(0)=a$, $u'(0)=b$, is
\begin{equation}
\label{eq:codazzi-solutions}
u_{a,b}(s):=\frac{a+(2 a^2+b)\, s}{1+2as+(2 a^2+b)\, s^2}.
\end{equation}

If $u_{a,b}(s)$ is defined for every $s\in\rr$ then either $a^2+b>0$ or $u_{a,b}(s)\equiv 0$. Moreover
\begin{equation}
\label{eq:u'+u^2}
u_{a,b}'(s)+u_{a,b}(s)^2=\frac{a^2+b}{(1+2as+(2a^2+b)\,s^2)^2}.
\end{equation}
\end{lemma}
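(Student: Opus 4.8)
The plan is to linearize the nonlinear equation~\eqref{eq:codazzi} by a logarithmic-derivative substitution. The motivating observation is that the numerator of the claimed solution~\eqref{eq:codazzi-solutions} is, up to the factor $1/2$, the derivative of its denominator: writing $P(s):=1+2as+(2a^2+b)\,s^2$ one has $P'(s)=2\big(a+(2a^2+b)\,s\big)$, so that $u_{a,b}=\tfrac12\,P'/P=\tfrac12(\log P)'$. This suggests seeking \emph{every} solution in the form $u=P'/(2P)$ on an interval where $P\neq 0$. First I would substitute $u=P'/(2P)$ into the left-hand side of~\eqref{eq:codazzi} and expand $u'$, $u''$, $u'u$ and $u^3$ in terms of $P$ and its derivatives. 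A direct computation shows that the terms carrying $P'P''$ and $(P')^3$ cancel identically, leaving
\[
u''+6\,u'u+4\,u^3=\frac{P'''}{2P}.
\]
Hence, wherever $P\neq 0$, equation~\eqref{eq:codazzi} is equivalent to $P'''=0$, i.e.\ $P$ is a polynomial of degree at most two.

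Given the data $u(0)=a$, $u'(0)=b$, I would normalize $P(0)=1$ (legitimate, since $u$ depends only on $P'/P$). Then $u(0)=P'(0)/2=a$ forces $P'(0)=2a$, and $u'(0)=P''(0)/2-(P'(0))^2/2=b$ forces $P''(0)=2(2a^2+b)$. As $P'''=0$, Taylor expansion recovers exactly $P(s)=1+2as+(2a^2+b)\,s^2$, and therefore $u=\tfrac12 P'/P$ is precisely~\eqref{eq:codazzi-solutions}. Uniqueness presents no difficulty: written as $u''=-6\,u'u-4\,u^3$, the right-hand side is polynomial in $(u,u')$, hence locally Lipschitz, so Picard--Lindel\"of guarantees that~\eqref{eq:codazzi-solutions} is the only solution about the origin with the prescribed initial conditions.

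For the global statement I would note that $u_{a,b}$ is defined exactly where $P\neq 0$, and analyze the real zeros of $P$. When $2a^2+b\neq0$, $P$ is a genuine quadratic of discriminant $(2a)^2-4(2a^2+b)=-4(a^2+b)$; since $P(0)=1>0$ and $P$ is continuous, $P$ is zero-free on $\rr$ if and only if this discriminant is negative, that is $a^2+b>0$ (which forces $2a^2+b=a^2+(a^2+b)>0$, so $P>0$ throughout). When $2a^2+b=0$, $P(s)=1+2as$ vanishes somewhere unless $a=0$; and $2a^2+b=0$ with $a=0$ gives $b=0$, whence the numerator $a+(2a^2+b)s\equiv 0$ and $u_{a,b}\equiv 0$. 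Collecting the cases gives the stated dichotomy. Finally, the identity~\eqref{eq:u'+u^2} follows from the same substitution, since $u'+u^2=\tfrac{P''}{2P}-\tfrac{(P')^2}{4P^2}=\tfrac{2P''P-(P')^2}{4P^2}$, and inserting $P'$, $P''$ collapses the numerator to the constant $4(a^2+b)$, yielding $u'_{a,b}+u_{a,b}^2=(a^2+b)/P^2$. (Alternatively, $v:=u'+u^2$ satisfies the linear equation $v'=-4uv$ with $v(0)=a^2+b$, which independently explains the sign constraint governing global solutions.)

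I expect the only genuinely creative step to be guessing the linearizing substitution $u=\tfrac12(\log P)'$; once it is in hand, every remaining assertion reduces to a mechanical verification. The one place demanding care is the degenerate regime $2a^2+b=0$, which must be isolated both when reading off the polynomial normal form and when classifying the solutions that are defined on all of $\rr$.
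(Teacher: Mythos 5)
Your proposal is correct, and its logical skeleton matches the paper's: exhibit the explicit solution and invoke Picard--Lindel\"of uniqueness (the right-hand side of $u''=-6u'u-4u^3$ is polynomial in $(u,u')$, hence locally Lipschitz), then analyze the real zeros of the denominator via its discriminant $-4(a^2+b)$, then obtain \eqref{eq:u'+u^2} by computation. The one genuinely different element is your first step: the paper disposes of \eqref{eq:codazzi-solutions} as ``a direct computation,'' i.e.\ verification, whereas you \emph{derive} it by the linearizing substitution $u=\tfrac12(\log P)'$, under which \eqref{eq:codazzi} becomes $P'''=0$ wherever $P\neq 0$ (your cancellation of the $P'P''/P^2$ and $(P')^3/P^3$ terms checks out). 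This buys two things the paper's verification does not: it explains the rational form of the solutions, and it makes \eqref{eq:u'+u^2} drop out as $u'+u^2=\bigl(2P''P-(P')^2\bigr)/(4P^2)$ with constant numerator $4(a^2+b)$. Your parenthetical remark that $v=u'+u^2$ satisfies $v'=-4uv$, $v(0)=a^2+b$, is also correct and gives an independent one-line proof of \eqref{eq:u'+u^2} together with its sign.

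One assertion in your global analysis is stated without justification and is precisely the point the paper's proof takes care to address: you claim $u_{a,b}$ ``is defined exactly where $P\neq 0$,'' but a priori a zero of the denominator could be removable by cancellation against the numerator, in which case $u_{a,b}$ might extend across it. The claim is in fact true, and the patch is one line: the numerator of \eqref{eq:codazzi-solutions} equals $P'/2$, so a common zero $s_0$ of numerator and denominator would be a double zero of $P$; in the quadratic case this happens exactly when $a^2+b=0$ and $a\neq 0$, where $P(s)=(1+as)^2$ and cancellation leaves $u_{a,b}(s)=a/(1+as)$, which still has a pole at $s_0=-1/a$ (while in the degenerate case $2a^2+b=0$ the numerator is the nonzero constant $a$, so no cancellation occurs). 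This is the same phenomenon the paper handles with its ``quotient of a constant over a degree one polynomial'' case. With that line inserted, your argument is complete and covers all cases of the stated dichotomy.
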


\begin{proof}
The first part of the Lemma is just a direct computation using the uniqueness of solutions for ordinary differential equations. For the second part, let us write $u_{a,b}(s)=p(s)/r(s)$, where $p(s)=a+(2a^2+b)\,s$, $r(s)=1+2as+(2a^2+b)\,s^2$. If $r(s)$ has no real zeroes, then the discriminant $-4a^2-4b$ of the polynomial $1+2as+(2a^2+b)\,s^2$ must be negative. If $r(s)$ has a zero $s_0$, then we must have $p(s_0)=0$ in order to have $u_{a,b}(s)$ well defined at $s_0$. Hence $u_{a,b}(s)$ can be expressed as the quotient of a constant over a degree one polynomial. From \eqref{eq:codazzi-solutions} we get $u_{a,b}=a\,(1+2as)^{-1}$, which has a pole unless $a=0$, i.e., $u_{a,b}(s)\equiv 0$. Equation \eqref{eq:u'+u^2} is just a direct computation.
\end{proof}

\begin{lemma}
\label{lem:continuity}
Consider $a,b,a_i,b_i\in\rr$, $i\in\nn$, so that $u_{a_i,b_i}$, $u_{a,b}$ are defined on the whole real line and the sequence $u_{a_i,b_i}$ converges pointwise to $u_{a,b}$. Then $a=\lim_{i\to\infty} a_i$, $b=\lim_{i\to\infty} b_i$. 
\end{lemma}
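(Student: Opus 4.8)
The plan is to exploit the explicit rational formula \eqref{eq:codazzi-solutions} directly, recovering the parameters from the values of the solutions at only two points. It is convenient to set $c:=2a^2+b$ and $c_i:=2a_i^2+b_i$, so that
\[
u_{a,b}(s)=\frac{a+cs}{1+2as+cs^2},
\]
and the desired conclusion $b_i\to b$ will follow as soon as we establish $a_i\to a$ and $c_i\to c$.

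First I would evaluate at $s=0$. Since $u_{a,b}(0)=a$ and $u_{a_i,b_i}(0)=a_i$, the hypothesis of pointwise convergence at the origin gives $a_i\to a$ at once.

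Next, to recover $c$, the idea is to solve the defining relation for $c$ in terms of $a$ and a single value $u:=u_{a,b}(s_0)$ at a conveniently chosen point $s_0\neq 0$. Clearing denominators in $u=(a+cs_0)/(1+2as_0+cs_0^2)$ and isolating $c$ produces the identity
\[
c=\frac{a-u-2aus_0}{s_0(us_0-1)},
\]
which is valid whenever $us_0\neq 1$. The key observation is that the equation $u_{a,b}(s)\,s=1$ simplifies to $1+as=0$, so it can hold for at most one value of $s$ (namely $s=-1/a$ when $a\neq 0$); hence at least one of $s_0\in\{1,2\}$ satisfies $u_{a,b}(s_0)\,s_0\neq 1$. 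I would then apply the same identity to each $u_{a_i,b_i}$: using $a_i\to a$ and the pointwise convergence $u_{a_i,b_i}(s_0)\to u_{a,b}(s_0)$, the right-hand side converges to $c$, because its denominator stays bounded away from zero for large $i$ (it tends to $s_0(us_0-1)\neq 0$). This yields $c_i\to c$, and therefore $b_i=c_i-2a_i^2\to c-2a^2=b$.

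The only delicate point is the choice of $s_0$ and the non-degeneracy of the denominator $s_0(us_0-1)$; once a good $s_0$ is fixed, the remainder is a plain continuity argument applied to the explicit formula. In particular, no compactness or convergence-of-derivatives argument is required, since the two evaluations at $s=0$ and $s=s_0$ already determine the pair $(a,b)$ continuously from the limit function.
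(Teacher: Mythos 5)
Your proof is correct, and it takes a genuinely different route from the paper's. The paper argues by compactness on the sequence $b_i$: after getting $a_i\to a$ from the value at $s=0$, it rules out $\infty$ as an accumulation point of $\{b_i\}$ (otherwise $u_{a_i,b_i}$ would behave like $1/s$ near the origin), extracts a convergent subsequence $b_{i_j}\to c$, and shows $c=b$ by evaluating the pointwise limit at a single well-chosen point, with a case split: $s=-1/(2a)$ when $a\neq 0$, where $u_{a,b}(-1/(2a))=-2ab/(2a^2+b)$ determines $b$, and $s=1$ when $a=0$. You instead invert the explicit formula: you recover $c=2a^2+b$ continuously from $a$ and one further value $u_{a,b}(s_0)$ via $c=(a-u-2aus_0)/(s_0(us_0-1))$, and your key observation — that the degenerate locus $u_{a,b}(s)\,s=1$ reduces to $1+as=0$, hence excludes at most one of $s_0\in\{1,2\}$ — is verified by a direct computation (cross-multiplying is legitimate because global definedness forces the denominator $1+2as+(2a^2+b)s^2$ to be nowhere zero). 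This buys a more elementary and slightly stronger result: no subsequence extraction, no separate argument against unbounded $b_i$, no case split on $a$, and in fact continuous dependence of $(a,b)$ on the two values $u_{a,b}(0)$ and $u_{a,b}(s_0)$. The only point worth making fully explicit is that your identity for $c_i$ requires $u_{a_i,b_i}(s_0)\,s_0\neq 1$, which, as you note, holds for all sufficiently large $i$ since $u_{a_i,b_i}(s_0)\to u_{a,b}(s_0)$ and $u_{a,b}(s_0)\,s_0\neq 1$; the finitely many exceptional indices are irrelevant to the limit.
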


\begin{proof}
We first observe $\lim_{i\to\infty} a_i=\lim_{i\to\infty} u_{a_i,b_i}(0)=u_{a,b}(0)=a$. For the second limit, we remark that the sequence $b_i$ cannot have $\infty$ as an accumulation point since in this case $u_{a_i,b_i}(s)$ would converge to $1/s$ near $0$. Assume $c$ is a finite accumulation point of the sequence $b_i$, obtained as the limit, when $j\to\infty$, of the subsequence $b_{i_j}$. We first consider the case $a\neq 0$. Evaluating at $s=-1/(2a)$, we get
\[
-\frac{2ab}{2a^2+b}=u_{a,b}(-1/(2a))=\lim_{j\to\infty} u_{a_{i_j},b_{i_j}}(-1/(2a))
=-\frac{2ac}{2a^2+c},
\]
what implies $b=c$. Hence $b=\lim_{i\to\infty} b_i$. In case $a=0$, 
\[
\frac{b}{1+b}=u_{0,b}(1)=\lim_{j\to\infty} u_{a_{i_j},b_{i_j}}(1)=\lim_{j\to\infty}\frac{a_{i_j}+(2a_{i_j}^2+b_{i_j})}{1+2a_{i_j}+(2a_{i_j}^2+b_{i_j})}=\frac{c}{1+c},
\]
what implies $b=c$. Hence $b=\lim_{i\to\infty} b_i$.
\end{proof}

\begin{lemma}
\label{lem:parameterization}
Let $\Sg\subset\hh^1$ be a complete oriented area-stationary surface of class $C^1$ without singular points, and let $\Ga:I\to\Sg$ be an integral curve of $S$. Define the parameterization $F(\eps,s)=\Ga(\eps)+s\,Z_{\Ga(\eps)}$ and let $\ga_\eps(s)=F(\eps,s)$. Then we have
\begin{enumerate}
\item $Z_{\Ga(\eps)}$ is smooth for almost every $\eps\in I$.
\item If $\eps$ is a regular point of $Z_{\Ga(\eps)}$, then $F$ is differentiable at $(\eps,s)$ for all $s\in\rr$. Moreover, the vector fields $\ptl F/\ptl s$ and $\ptl F/\ptl\eps$ are orthogonal with respect to the left-invariant Riemannian metric and
\begin{equation}
\label{eq:jacobian}
\Jac(F)(\eps,s)=|V_\eps(s)|=|N_h(\ga_\eps(s))|^{-1}|\escpr{V_\eps(s),T_{\ga_\eps(s)}}|, 
\end{equation}
where $V_\eps:=\ptl F/\ptl \eps$.
\item Along any geodesic $\ga_\eps(s)$, the function $u_\eps(s)=(\escpr{N,T}/|N_h|)(\ga_\eps(s))$ satisfies the ordinary differential equation \eqref{eq:codazzi}.
\item If $\eps$ is a regular point of $Z_{\Ga(\eps)}$, then $\escpr{V_\eps(s),T_{\ga_\eps(s)}}=a_\eps+b_\eps s+c_\eps s^2$, where
\begin{equation}
\label{eq:abceps}
a_\eps=-|N_h|(\Ga(\eps)),\quad b_\eps=-2\escpr{N,T}(\Ga(\eps)), \quad c_\eps=-|N_h|\,\bigg(Z\bigg(\frac{\escpr{N,T}}{|N_h|}\bigg)+2\,\frac{\escpr{N,T}^2}{|N_h|^2}\bigg)(\Ga(\eps)).
\end{equation}
\end{enumerate}
\end{lemma}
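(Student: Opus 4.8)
The plan is to reduce everything to an explicit computation along the ruling geodesics. By Theorem~\ref{thm:c1foliation} each $\ga_\eps$ is a horizontal straight line, so along it $Z$ is the restriction of a fixed left-invariant field $Z=A(\eps)\,X+B(\eps)\,Y$ with $A^2+B^2=1$; hence $\nuh=-J(Z)=B\,X-A\,Y$ and $T$ are also constant along $\ga_\eps$, whereas $N=\mnh\,\nuh+\nt\,T$ varies only through its coefficients. Completeness forces each $\ga_\eps$ to be defined on all of $\rr$, so by Lemma~\ref{lem:uab} the quadratic denominators below never vanish. I will write $F(\eps,s)$ in coordinates using the explicit formula \eqref{eq:eqgeo} for horizontal geodesics, with base point $\Ga(\eps)$ and horizontal direction $(A(\eps),B(\eps))$, and simply differentiate in $\eps$.

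At a regular $\eps$, set $V_\eps=\ptl F/\ptl\eps$ and expand it in the frame $\{Z,\nuh,T\}$. Feeding the identity $\Ga'=S=\nt\,\nuh-\mnh\,T$ into \eqref{eq:eqgeo} to compute the $\eps$-derivatives of the base point, a direct calculation gives $\escpr{V_\eps,Z}=0$ (the $s^0$ coefficient vanishes because $\Ga'=S$ is orthogonal to $Z$, and the $s^1$ coefficient is $\tfrac12(A^2+B^2)'=0$), which is the asserted orthogonality of $\ptl F/\ptl\eps$ and $\ptl F/\ptl s=Z$ (the unit tangent of the geodesic). The same calculation produces
\[
\escpr{V_\eps,T}=a_\eps+b_\eps\,s+c_\eps\,s^2,\qquad \escpr{V_\eps,\nuh}=\nt(\Ga(\eps))-c_\eps\,s,
\]
with $a_\eps=-\mnh(\Ga(\eps))$ and $b_\eps=-2\,\nt(\Ga(\eps))$ falling straight out of evaluating $S$ and using $A^2+B^2=1$, and with $c_\eps=A B'-A' B$ for the moment only named. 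This already yields the quadratic of (4) up to the value of $c_\eps$, and it gives $\Jac(F)=|V_\eps|$.

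To finish (2)--(4) I use that $V_\eps$ is tangent: since $N=\mnh\,\nuh+\nt\,T$, the equation $\escpr{N,V_\eps}=0$ reads $\escpr{V_\eps,\nuh}=-u\,\escpr{V_\eps,T}$ with $u=\nt/\mnh$. Solving for $u$ expresses $u_\eps(s)$ as an explicit rational function whose numerator and denominator are exactly the affine and quadratic polynomials above; dividing by $\mnh(\Ga(\eps))$ I recognize it as a function of the shape \eqref{eq:codazzi-solutions}, namely $u_{a,b}$ with $a=u_\eps(0)$ and $2a^2+b=-c_\eps/\mnh(\Ga(\eps))$. By Lemma~\ref{lem:uab} it then solves \eqref{eq:codazzi}, which is (3) at regular $\eps$; matching the coefficient $b$ with $u_\eps'(0)$ gives the stated value of $c_\eps$ and hence (4); and $|V_\eps|^2=(1+u^2)\,\escpr{V_\eps,T}^2=\mnh^{-2}\escpr{V_\eps,T}^2$ gives the Jacobian formula \eqref{eq:jacobian}. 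To promote (3) from almost every $\eps$ to every characteristic geodesic, I take regular $\eps_i\to\eps$, observe $u_{\eps_i}\to u_\eps$ pointwise by continuity of $N$, and apply Lemma~\ref{lem:continuity} to conclude the limit is again of the form \eqref{eq:codazzi-solutions}.

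The main obstacle is part (1): showing $Z_{\Ga(\eps)}$ is differentiable for almost every $\eps$. Writing $Z=\cos\theta(\eps)\,X+\sin\theta(\eps)\,Y$, I expect to prove that the angle $\theta$ is monotone, equivalently of bounded variation, so that Lebesgue's differentiation theorem applies. Monotonicity should come from the fact that the ruling geodesics $\ga_\eps$ are pairwise disjoint (they foliate the regular set), which restricts how their directions may turn along the transversal seed curve $\Ga$. At regular points this is consistent with the sign $c_\eps=\theta'=-\mnh\,(u_\eps'(0)+2\,u_\eps(0)^2)\le 0$, obtained from $u'+u^2\ge 0$ via \eqref{eq:u'+u^2} together with completeness; but converting this pointwise inequality into genuine monotonicity, without presupposing the differentiability one is trying to establish, is the delicate step on which the whole parameterization rests.
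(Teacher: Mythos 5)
Your treatment of parts (2)--(4) is sound and is essentially the paper's own computation: expanding $V_\eps=\ptl F/\ptl\eps$ in coordinates via \eqref{eq:eqgeo}, reading off the quadratic $\escpr{V_\eps,T}=a_\eps+b_\eps s+c_\eps s^2$, and extracting $u_\eps$ as a rational function of the shape \eqref{eq:codazzi-solutions} (your tangency relation $\escpr{N,V_\eps}=0$, i.e.\ $\escpr{V_\eps,\nuh}=-u\,\escpr{V_\eps,T}$, is the same identity the paper phrases as $V_\eps$ being proportional to $S$). The genuine gap is part (1), and you concede it yourself: you never prove that $\eps\mapsto Z_{\Ga(\eps)}$ is differentiable for almost every $\eps$. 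Your proposed route --- monotonicity of the angle $\theta(\eps)$ from the sign $c_\eps=\theta'\le 0$ --- is circular, since $c_\eps$ and the sign computation only exist at points where $\theta$ is already differentiable, which is precisely what is to be established; a priori $\theta$ could fail to be of bounded variation and no pointwise derivative information would be available. The paper closes this with a different device exploiting the same geometric input (disjointness of the rulings) in a place where monotonicity is free: fix $\eps_0$, take a \emph{second} seed curve $\Ga_1$ through $\ga_{\eps_0}(s_0)$ for some $s_0>0$, and use continuity plus non-crossing of the characteristic lines to produce functions $s(\eps)$ and a monotone increasing transition map $f(\eps)$ with $\Ga(\eps)+s(\eps)\,Z_{\Ga(\eps)}=\Ga_1(f(\eps))$ on a small interval. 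Lebesgue's theorem makes the monotone $f$ differentiable almost everywhere, so $s(\eps)\,Z_{\Ga(\eps)}=\Ga_1(f(\eps))-\Ga(\eps)$ is differentiable a.e., and normalizing recovers differentiability of $Z_{\Ga(\eps)}$ a.e.; a countable covering of $I$ finishes. In short: monotonicity must be applied to the transition map between two transversal seed curves, not to the angle of the rulings along one of them.

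A secondary, fixable flaw: in promoting (3) from regular $\eps$ to arbitrary $\eps$, you invoke Lemma~\ref{lem:continuity} in the wrong direction. That lemma \emph{assumes} the pointwise limit is already of the form $u_{a,b}$ and concludes convergence of the parameters; it cannot tell you that the limit has that form. What is needed (and what the paper does) is a compactness argument: $a_i=u_{\eps_i}(0)\to u_\eps(0)$ by continuity of $N$; if the $b_i$ were unbounded, the functions $u_{a_i,b_i}$ would converge to $1/s$ near $0$, contradicting the boundedness of the continuous function $u_\eps$ near $s=0$; hence $\{b_i\}$ is bounded, a convergent subsequence yields $u_{a_i,b_i}\to u_{a,b}$ pointwise, and so $u_\eps=u_{a,b}$ solves \eqref{eq:codazzi} by Lemma~\ref{lem:uab}. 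With that substitution and a genuine proof of (1) along the lines above, the rest of your argument goes through.
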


\begin{proof}
1. Fix some $\eps_0\in I$. For some $s_0>0$, let us take a seed curve $\Ga_1$ satisfying $\Ga_1(\eps_0)=\ga(s_0)$. A continuity argument implies the existence of a small open interval $J\subset I$ containing $\eps_0$, and functions $s:J\to\rr$ and $f:J\to\rr$ so that $f$ is monotone~increasing and
\[
\Ga(\eps)+s(\eps)\,Z_{\Ga(\eps)}=\Ga_1(f(\eps)).
\]
Hence $s(\eps)\,Z_{\Ga(\eps)}=\Ga_1(f(\eps))-\Ga(\eps)$ is smooth for almost every $\eps$ in a small interval near $0$. Dividing $s(\eps)\,Z_{\Ga(\eps)}$ by its Heisenberg modulus, which only involves the coordinates of $Z_{\Ga(\eps)}$ and the smooth components of the curve $\Ga(\eps)$, we conclude that $Z_{\Ga(\eps)}$ is smooth for almost every $\eps>0$ in $J$. We conclude by covering $I$ by a denumerable family of such intervals.

2. Let $\Ga(\eps)=(x(\eps),y(\eps),t(\eps))$, and $Z_{\Ga(\eps)}=(a(\eps),b(\eps),c(\eps))$, so that
\[
F(\eps,s)=(x(\eps)+sa(\eps),y(\eps)+sb(\eps),t(\eps)+s c(\eps)).
\]
Observe that $(c-ay+bx)(\eps)=0$ since $Z_{\Ga(\eps)}$ is horizontal. We always have\begin{align*}
\frac{\ptl F}{\ptl s}(\eps,s)&=a(\eps)\,X_{F(\eps,s)}+b(\eps)\,Y_{F(\eps,s)}.
\end{align*}
If $Z_{\Ga(\eps)}$ is smooth at some value $\eps$, we can compute
\begin{equation}
\label{eq:veps}
\begin{split}
V_\eps(s)=\frac{\ptl F}{\ptl\eps}(\eps,s)&=\big(x'(\eps)+sa'(\eps)\big)\,X_{F(\eps,s)}+\big(y'(\eps)+sb'(\eps)\big)\,Y_{F(\eps,s)}
\\
&+\big((t'-x'y+xy')(\eps)+2\,(ay'-bx')(\eps)\,s+(ab'-a'b)(\eps)\,s^2\big)\,T_{F(\eps,s)}.
\end{split}
\end{equation}
The scalar product of $\ptl F/\ptl s$ and $\ptl F/\ptl\eps$ can be easily seen to be equal to $(ax'+by')(\eps)+(aa'+bb')(\eps)\,s$, which is equal to $0$ since $\Ga'(\eps)=S_{\Ga(\eps)}$ is orthogonal to $Z_{\Ga(\eps)}$ and $|Z_{\Ga(\eps)}|=1$. From these properties equality $\Jac(F)(\eps,s)=|V_\eps(s)|$ in \eqref{eq:jacobian} follows.

Since $V_\eps(s)$ is orthogonal to $Z_{\ga_\eps(s)}$, there exists a function $f$ so that $V_\eps=fS$ and so $|V_\eps|=|f|$. On the other hand $\escpr{V_\eps,T}=-f\,|N_h|$, from where the last equation of \eqref{eq:jacobian} follows.

3. Assume first that $\eps$ is a regular point of $Z_{\Ga(\eps)}$. The vector $V_\eps(s)$ is proportional to $S_{F(\eps,s)}$ since it is orthogonal to $(\ptl F/\ptl s)(\eps,s)=Z_{F(\eps,s)}$. Hence we have
\begin{align*}
\frac{\escpr{N,T}}{|N_h|}(F(\eps,s))&=\frac{\escpr{V_\eps(s),J(Z_{F(\eps,s)})}}{\escpr{V_\eps(s),T_{\ga_\eps(s)}}}
\\
&=\frac{(ay'-bx')(\eps)+(ab'-a'b)(\eps)\,s}{(t'-x'y+xy')(\eps)+2\,(ay'-bx')(\eps)\,s+(ab'-a'b)(\eps)\,s^2},
\end{align*}
which is the function $u_{\bar{a},\bar{b}}(s)$, for
\begin{equation}
\label{eq:barabarb}
\begin{split}
\bar{a}&=\frac{(ay'-bx')(\eps)}{(t'-x'y+xy')(\eps)},
\\
2\bar{a}^2+\bar{b}&=\frac{(ab'-a'b)(\eps)}{(t'-x'y+xy')(\eps)},
\end{split}
\end{equation}
and hence it satisfies equation \eqref{eq:codazzi}.

For arbitrary $\eps$, consider a sequence of regular values $\eps_i$ of $Z_{\Ga(\eps)}$ with $\lim_{i\to\infty}\eps_i=\eps$.  For every $i$, there exist $a_i,b_i\in\rr$ so that $u_{a_i,b_i}(s)=(\escpr{N,T}/|N_h|)(\ga_{\eps_i}(s))$. Let $u(s)=(\escpr{N,T}/|N_h|)(\ga_\eps(s))$. Since $u_i\to u$ pointwise, we have $a_i=u_i(0)\to u(0)= a$. If the sequence $\{b_i\}_{i\in\nn}$ were unbounded, then $u_{a_i,b_i}$ should converge to $1/s$ near $0$, which is clearly unbounded and yields a contradiction. Hence $b_i$ is bounded, we can extract a convergent subsequence of $u_{a_i,b_i}$ to some $u_{a,b}$, and the function $u(s)=u_{a,b}(s)$ is smooth along the horizontal line $\ga(s)$ and satisfies the ordinary differential equation \eqref{eq:codazzi}.

4. Observe that $\escpr{V_\eps(s),T_{\ga_\eps(s)}}$ can be computed from \eqref{eq:veps} with
\[
a_\eps=(t'-x'y+xy')(\eps),\quad b_\eps=2\,(ay'-bx')(\eps),\quad c_\eps=(ab'-a'b)(\eps).
\]
We immediately obtain
\begin{align*}
a_\eps&=\escpr{S_{\Ga(\eps)},T_{\Ga(\eps)}}=-|N_h|(\Ga(\eps)),
\\
b_\eps&=2\,\escpr{S_{\Ga(\eps},J(Z_{\Ga(\eps)})}=-2\,\escpr{N,T}(\Ga(\eps)).
\end{align*}
For $c_\eps$, we have from \eqref{eq:barabarb}
\[
\bigg(Z\bigg(\frac{\escpr{N,T}}{|N_h|}\bigg)+2\,\frac{\escpr{N,T}^2}{|N_h|^2}\bigg)(\Ga(\eps))=\frac{(ab'-a'b)(\eps)}{-|N_h|(\Ga(\eps))}.
\]
\end{proof}

\begin{lemma}
\label{lem:p}
Let $\Sg\subset\hh^1$ be a complete oriented area-stationary surface of class $C^1$. Consider a characteristic geodesic $\ga:\rr\to\Sg$. Let $p(s)=a+bs+cs^2$, where
\[
a=-|N_h|(\ga(0)),\quad b=-2\escpr{N,T}(\ga(0)), \quad c=-|N_h|\,\bigg(Z\bigg(\frac{\escpr{N,T}}{|N_h|}\bigg)+2\,\frac{\escpr{N,T}^2}{|N_h|^2}\bigg)(\ga(0)).
\]
If $(q\circ\ga)(s)$ is not identically zero, then $v(s)=|p(s)|^{1/2}$ is positive everywhere and satisfies
\begin{equation}
\label{eq:v}
((v(s)^{-1})')^2-\frac{1}{2}\,(v(s)^{-2})''-\frac{\escpr{N,T}}{|N_h|}(\ga(s))(v(s)^{-2})'=\frac{q(\ga(s))}{4v(s)^2}.
\end{equation}
\end{lemma}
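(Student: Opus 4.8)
The plan is to identify the quadratic $p(s)$ with a constant multiple of the denominator of the explicit solution $u_{\alpha,\beta}$ of the Codazzi equation \eqref{eq:codazzi} along $\ga$, and then to reduce \eqref{eq:v} to an elementary polynomial identity. First I would record the behaviour of $u(s):=(\escpr{N,T}/|N_h|)(\ga(s))$ along the characteristic geodesic. Since $\ga$ is a characteristic curve, it is an arc-length parametrized horizontal line with unit tangent $Z$, so by Lemma~\ref{lem:parameterization}(3) the function $u$ solves \eqref{eq:codazzi} with initial data $\alpha=u(0)=(\escpr{N,T}/|N_h|)(\ga(0))$ and $\beta=u'(0)=Z(\escpr{N,T}/|N_h|)(\ga(0))$; hence $u=u_{\alpha,\beta}$ is given by \eqref{eq:codazzi-solutions}. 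Writing $r(s)=1+2\alpha s+(2\alpha^2+\beta)s^2$ for its denominator and comparing coefficients against the definitions of $a$, $b$, $c$ (using $\escpr{N,T}(\ga(0))=\alpha\,|N_h|(\ga(0))$), one sees at once that
\[
p(s)=-|N_h|(\ga(0))\,r(s)=a\,r(s),
\]
so $p$ is a fixed negative multiple of $r$. Two further identities will drive the whole computation: the numerator of $u_{\alpha,\beta}$ equals $\tfrac12 r'$, so that $u\,r=\tfrac12 r'$; and, by \eqref{eq:u'+u^2}, $q\circ\ga=4(u'+u^2)=4(\alpha^2+\beta)/r^2$.

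Next I would establish the positivity of $v$. Because $\Sg$ is complete and has no singular points, $\ga$ is defined on all of $\rr$, and therefore so is $u_{\alpha,\beta}$. The dichotomy in Lemma~\ref{lem:uab} then leaves two alternatives: either $\alpha^2+\beta>0$ or $u_{\alpha,\beta}\equiv 0$. The second case forces $q\circ\ga\equiv 0$ through the identity $q\circ\ga=4(\alpha^2+\beta)/r^2$, contradicting the hypothesis that $q\circ\ga$ is not identically zero. Hence $\alpha^2+\beta>0$, the discriminant $-4(\alpha^2+\beta)$ of $r$ is negative, and since $r(0)=1$ we get $r(s)>0$ for every $s$. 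Consequently $p(s)=a\,r(s)$ keeps the sign of $a=-|N_h|(\ga(0))<0$, so it never vanishes, and $v(s)=|p(s)|^{1/2}=(|N_h|(\ga(0))\,r(s))^{1/2}>0$ everywhere.

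Finally I would verify \eqref{eq:v}. Setting $w:=v^{-2}=1/|p|=C/r$ with $C=|N_h|(\ga(0))^{-1}$, the left-hand side of \eqref{eq:v} rewrites as $\tfrac14(w')^2/w-\tfrac12 w''-u\,w'$ and the right-hand side as $\tfrac14\,(q\circ\ga)\,w$. Substituting $w=C/r$, cancelling the common factor $C$ and multiplying through by $r^3$, the identity \eqref{eq:v} becomes
\[
-\tfrac34(r')^2+\tfrac12\,r\,r''+u\,r\,r'=\tfrac14\,(q\circ\ga)\,r^2=\alpha^2+\beta,
\]
the last step using $q\circ\ga=4(\alpha^2+\beta)/r^2$. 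Invoking $u\,r=\tfrac12 r'$ turns the middle term into $\tfrac12(r')^2$, so the identity collapses to $-\tfrac14(r')^2+\tfrac12\,r\,r''=\alpha^2+\beta$, which is immediate after expanding $r'=2\alpha+2(2\alpha^2+\beta)s$ and $r''=2(2\alpha^2+\beta)$. The only genuinely delicate point is the bookkeeping of the dichotomy from Lemma~\ref{lem:uab}, which is what guarantees that $p$, and hence $v$, never vanishes under the stated hypothesis; once $p=a\,r$ and $u\,r=\tfrac12 r'$ are in hand, everything reduces to the single polynomial identity displayed above.
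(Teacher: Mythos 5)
Your proposal is correct and is essentially the paper's own argument: like the paper, you invoke Lemma~\ref{lem:parameterization}(3) and Lemma~\ref{lem:uab} (through \eqref{eq:codazzi-solutions} and \eqref{eq:u'+u^2}) to rule out zeros of $p$ --- your dichotomy $\alpha^2+\beta>0$ versus $u\equiv 0$ is exactly the paper's observation that the discriminant $b^2-4ac$ is negative when $q\circ\ga\not\equiv 0$ --- and then verify \eqref{eq:v} by direct computation. Your normalization $p=a\,r$ together with $u\,r=\tfrac{1}{2}\,r'$ merely makes fully explicit the ``straightforward computation'' the paper compresses into the identity $(\escpr{N,T}/|N_h|)(\ga(s))=((b/2)+cs)/p(s)$ and the evaluation of the left-hand side as $(b^2-4ac)/(4\,p(s)^3)$.
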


\begin{proof}
Since $p(s)$ is a degree two polynomial, it has a zero if and only if $b^2-4ac<0$. A simple computation shows
\begin{equation}
\label{eq:b^2-4ac}
b^2-4ac=-4\,|N_h|\,\bigg(Z\bigg(\frac{\escpr{N,T}}{|N_h|}\bigg)+\frac{\escpr{N,T}^2}{|N_h|^2}\bigg)(\ga(0)),
\end{equation}
which is negative by Lemma~\ref{lem:uab} and \eqref{eq:u'+u^2}.

To check equation \eqref{eq:v}, we compute $(\escpr{N,T}/|N_h|)(\ga(s))$ in terms of $a$, $b$, $c$, to obtain
\[
\frac{\escpr{N,T}}{|N_h|}(\ga(s))=\frac{(b/2)+cs}{a+bs+cs^2}.
\]
A straightforward computation of the left side of \eqref{eq:v}, together with \eqref{eq:b^2-4ac} and \eqref{eq:u'+u^2}, yields
\[
((v(s)^{-1})')^2-\frac{1}{2}\,(v(s)^{-2})''-\frac{\escpr{N,T}}{|N_h|}(\ga(s))(v(s)^{-2})'=\frac{b^2-4ac}{4\,(a+bs+cs^2)^3}=\frac{q(\ga(s))}{4\,v(s)^2},
\]
what implies \eqref{eq:v}.
\end{proof}

\begin{lemma}
\label{lem:codazzi}
Let $\Sg\subset\hh^1$ be a complete oriented area-stationary surface of class $C^1$, and $\ga:\rr\to\Sg$ be a characteristic geodesic parameterized by arc-length. 
\begin{enumerate}
\item Either $q(\ga(s))> 0$ for all $s\in\rr$ or $q(\ga(s))\equiv 0$. If $q(\ga(s_0))=0$, then $T_{\ga(s_0)}$ is tangent to $\Sg$ at $\ga(s_0)$.
\item If $q\equiv 0$ on $\Sg$ then $\Sg$ is a vertical plane.
\end{enumerate}
\end{lemma}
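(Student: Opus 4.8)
The plan is to derive both statements from the scalar equation that $u(s):=(\escpr{N,T}/|N_h|)(\ga(s))$ satisfies along the geodesic. Since $\ga$ is a characteristic curve parameterized by arc-length it lies in the regular set, so $u$ is finite and defined on all of $\rr$, and $Z$ acts along $\ga$ as $d/ds$; hence the definition \eqref{eq:q} of $q$ reads $\tfrac14\,q(\ga(s))=u'(s)+u(s)^2$. By Lemma~\ref{lem:parameterization}(3) the function $u$ solves \eqref{eq:codazzi}, so $u=u_{a,b}$ with $a=u(0)$, $b=u'(0)$ by Lemma~\ref{lem:uab}, and \eqref{eq:u'+u^2} gives
\[
\tfrac14\,q(\ga(s))=\frac{a^2+b}{(1+2as+(2a^2+b)\,s^2)^2}.
\]
The denominator has discriminant $-4(a^2+b)$ and, exactly as in Lemma~\ref{lem:p}, never vanishes in the relevant case. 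Lemma~\ref{lem:uab} then splits the globally defined solutions into two types: either $a^2+b>0$, whence $q(\ga(s))>0$ for every $s$, or $u_{a,b}\equiv 0$, whence $a=b=0$ and $q(\ga(s))\equiv 0$. This is the dichotomy in (1). Finally, if $q(\ga(s_0))=0$ we are necessarily in the second case, so $u\equiv 0$, i.e. $\escpr{N,T}\equiv 0$ along $\ga$; thus $N=\nu_h$ is horizontal and $T\in T_{\ga(s_0)}\Sg$.

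For (2), assume $q\equiv 0$ on $\Sg$. By (1), every characteristic geodesic has $u\equiv 0$, so $\escpr{N,T}\equiv 0$ on the regular set; continuity of $N$ then forces $|N_h|\equiv 1$ everywhere, so $\Sg$ has empty singular set and $T$ is tangent to $\Sg$ at every point. I would now plug $\escpr{N,T}=0$, $|N_h|=1$ and $Z(\escpr{N,T}/|N_h|)=u'=0$ into Lemma~\ref{lem:parameterization}(4): the coefficients of \eqref{eq:abceps} reduce to $a_\eps=-1$, $b_\eps=c_\eps=0$, so $\escpr{V_\eps(s),T}\equiv-1$. Since $V_\eps$ is a multiple $fS$ of $S=-T$ and $\escpr{V_\eps,T}=-f$, we get $f\equiv1$, i.e. $\partial F/\partial\eps=V_\eps=-T$ at every regular $\eps$. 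Differentiating $F(\eps,s)=\Ga(\eps)+s\,Z_{\Ga(\eps)}$ in $\eps$ and using $\Ga'(\eps)=S=-T$, the coefficient of $s$ yields $dZ_{\Ga(\eps)}/d\eps=0$; hence $Z_{\Ga(\eps)}\equiv Z_0$ is constant and $F(\eps,s)=\Ga(\eps)+s\,Z_0$ parameterizes the vertical plane spanned by the vertical seed line $\Ga$ and the fixed horizontal vector $Z_0$. As two affine planes meeting in an open set coincide, connectedness and completeness would upgrade this to the statement that $\Sg$ is a single vertical plane.

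The step I expect to be the main obstacle is the global constancy of $Z_{\Ga(\eps)}$ in the transverse direction: Lemma~\ref{lem:parameterization}(1) only provides differentiability for almost every $\eps$, and a continuous function with vanishing derivative almost everywhere need not be constant. To close this gap cleanly I would bypass the parameterization for the global step and argue geometrically instead. Because $T$ is tangent to $\Sg$ everywhere, the Euclidean unit normal of $\Sg$ is orthogonal to $\partial_t$, hence horizontal; therefore $\Sg$ is a vertical cylinder $C\times\rr$ over a $C^1$ plane curve $C$. The characteristic curves, which are genuine horizontal straight lines by Theorem~\ref{thm:c1foliation}, project to straight lines that are tangent to $C$ and contained in $C$; since a connected $C^1$ curve containing its own tangent line at each of its points is a straight line, $C$ is a line and $\Sg=C\times\rr$ is a vertical plane. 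Verifying the $C^1$-regularity of $C$ and the rigidity of the projected lines is the delicate part.
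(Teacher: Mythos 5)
Your part (1) is essentially the paper's own argument: the paper likewise writes $q(\ga(s))=4\,(u'(s)+u(s)^2)$, combines Lemma~\ref{lem:parameterization}(3) with \eqref{eq:u'+u^2}, and reads the dichotomy off Lemma~\ref{lem:uab}; your deduction of tangency of $T$ from $\escpr{N,T}(\ga(s_0))=0$ is also the paper's. For part (2) you take a genuinely different route. The paper works in local coordinates: it represents $\Sg$ near a point as the intrinsic graph of a $C^1$ function $u$ over the plane $y=0$, observes that tangency of $T$ forces $u_t\equiv 0$, and uses the constancy of $u_x+2uu_t$ along characteristic curves (established in the proof of Theorem~\ref{thm:c1foliation}) to get $u_x$ constant, hence $u(x,t)=ax$ and $\Sg$ locally contained in the vertical plane $y=ax$. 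Your argument is global and geometric: from $\escpr{N,T}\equiv 0$ you deduce emptiness of the singular set and vertical-translation invariance, so $\Sg$ is a vertical cylinder over a $C^1$ plane curve $C$; the characteristic lines of Theorem~\ref{thm:c1foliation} project to full Euclidean straight lines contained in $C$, and an open-closed argument in the connected $1$-manifold $C$ (the projected line is open in $C$ by invariance of domain, and closed because a unit-speed line eventually leaves every compact set of the plane) forces $C$ to be a single line. Both proofs are correct; yours buys a cleaner global conclusion --- the paper is terse about why the local planes $y=ax$, with $a$ a priori chart-dependent, patch into one plane, which your connectedness argument handles directly --- at the price of the cylinder step, where you must justify that pointwise tangency of the (smooth) field $T$ to a $C^1$ surface yields flow invariance (restrict $T$ to $\Sg$, use Peano existence for the restricted field on $\Sg$ and uniqueness of integral curves of $T$ in $\hh^1$); you flagged this but should make it explicit. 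You were also right to abandon the computation via Lemma~\ref{lem:parameterization}(4): almost-everywhere differentiability of $\eps\mapsto Z_{\Ga(\eps)}$ is indeed too weak to conclude constancy of $Z_{\Ga(\eps)}$, and the paper's proof avoids that route as well.
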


\begin{proof}
To prove 1, observe that $q(\ga(s))=4\,(u'(s)+u(s)^2)$. Equation \eqref{eq:u'+u^2} implies
\begin{equation*}
u'(s)+u(s)^2=\frac{a^2+b}{(1+2as+(2a^2+b)\,s^2)^2},
\end{equation*}
which is either positive or identically zero by Lemma~\ref{lem:uab}. If $q(\ga(s_0))=0$ then $\escpr{N,T}(\ga(s_0))=0$ and so $T_{\ga(s_0)}$ is tangent to $\Sg$.

To prove 2, represent $\Sg$ locally as the intrinsic graph of a function $u$ of class $C^1$ over an open domain of the plane $y=0$. Since $T$ is tangent to $\Sg$ we have $u_t\equiv 0$. As $\Sg$ is foliated by characteristic geodesics, $u_x$ is constant along the characteristic curves $t'=2ux'$. Together with condition $u_t\equiv 0$, this implies that $u_x$ is constant and so $u(x,t)=ax$, for some real constant $a$. This implies that $\Sg$ is locally the image of $(x,t)\mapsto (x,ax,t-ax^2)$, which is part of the  vertical plane $y=ax$.
\end{proof}

\begin{remark}
\label{rem:codazzi}
Given the $t$-graph of a $C^1$ function $u:\Om\subset\rr^2\to\rr$, Cheng, Hwang and Yang proved in \cite[Thm.~A]{MR2983199} that the function $D=((u_x-y)^2+(u_y+x)^2)^{1/2}$ satisfies the diferential equation
\begin{equation}
\label{eq:D}
DD''=2(D'-1)(D'-2),
\end{equation}
where the prime symbol $'$ denotes the derivative along the projection to $t=0$ of the characteristic vector field in the surface. Since the Riemannian unit normal of the $t$-graph of $u$ is given by
\[
N=\frac{(u_x-y)\,X+(u_y+x)\,Y-T}{(1+(u_x-y)^2+(u_y+x)^2)^{1/2}},
\]
one can immediately check that $D=|N_h|/\escpr{N,T}=u^{-1}$ (in a $t$-graph, the Reeb vector field $T$ is never tangent to the surface). A simple computation then shows that \eqref{eq:codazzi} implies \eqref{eq:D}. Equation~\ref{eq:D} played a key role in \cite{MR2983199} to obtain strong structure results of the singular set of an area-stationary $C^1$ surface and, in general, for weak solutions of the mean curvature equation.
\end{remark}

\begin{proof}[Proof of Theorem~\ref{thm:second}] For every $p\in\Sg$,
we consider extensions $E_1(s), E_2(s)$ of $Z_p$, $S_p$ along the curve $s\mapsto \varphi_s(p)$ so that $[E_i,U] = 0$, i.e., the vector fields $E_i$ are invariant under the flow generated by $U$. They are smooth with respect to $U$.

Denoting by  $V=\escpr{E_1,T}\,E_2-\escpr{E_2,T}\,E_1$ the horizontal Jacobian, see \cite[Lemma~3.2]{MR3044134}, we get from \cite[Lemma~3.4]{MR3044134} that
\[
A(\varphi_s(\Sg))=\int_\Sg |V(s)| \, d\Sg
\]
and
\[
\frac{d^2}{ds^2}\Big|_{s=0}A(\varphi_s(\Sg))=\int_\Sg U(U(|V|))\, d\Sg,
\]
as only $C^1$ regularity of the surface is required.
Since
\begin{equation*}
\begin{split}
U(U(|V|))=&-\frac{1}{|V|^3}\escpr{\n_{U}V,V}^2 +\frac{1}{|V|}\left(\escpr{\n_{U}\n_{U}V,V}+|\n_{U}V|^2\right)\\
&=\frac{1}{\mnh}\left(  \mnh \escpr{\n_{U}\n_{U}V,Z}+ \escpr{\n_U V,\nu_h}^2 +\escpr{\nabla_UV,T}^2\right),
\end{split}
\end{equation*}
at $s=0$, we only need to compute the expressions $\escpr{\n_{U}\n_{U}V,Z}$, $\escpr{\n_U V,\nu_h}^2$, and $\escpr{\nabla_UV,T}^2$ along the curve $s\mapsto\varphi_s(p)$ and evaluate them at $s=0$, corresponding to the point $p$.
We have 
\begin{equation*}
\nabla_UV=2\,\escpr{J(U),Z}\escpr{N,T}\nuh-2\,\escpr{J(U),S}Z+\mnh\nabla_ZU,
\end{equation*}
at $p$, what implies $\escpr{\nabla_UV,T}=0$, and
\begin{equation*}
\escpr{\n_U V,\nu_h}=2\,\escpr{U,\nuh}\escpr{N,T}+\mnh\,Z(\escpr{U,\nuh}).
\end{equation*}
On the other hand 
\begin{align*}
\escpr{\nabla_U\nabla_U V,Z}&=2\,\escpr{\n_UE_1,T}\escpr{\n_U E_2,Z}-2\,\escpr{\n_U E_2,T}\escpr{\n_U E_1,Z}
\\
&\qquad +\mnh\,\escpr{\n_U\n_U E_1,Z}-\escpr{\n_U\n_U E_2,T},
\end{align*}
at $p$, since 
\begin{align*}
\escpr{\n_U E_i,T}&=2\,\escpr{J(U),E_i},\quad i=1,2,
\\
\escpr{\n_U E_i,Z}&=\escpr{\n_{E_i}U,Z},\quad i=1,2,
\\
\escpr{\n_U\n_U E_i,T}&=2\,\escpr{J(\n_UU),E_i}+2\,\escpr{J(U),\n_{E_i}U},\quad i=1,2,
\\
\escpr{\n_U\n_U E_i,Z}&=\escpr{\nabla_{E_i}(\nabla_UU),Z}, \quad i=1,2.
\end{align*}
The above formulas follow from $\nabla_UE_i=\nabla_{E_i}U+2\,\escpr{J(U),E_i}\,T$ since $[E_i,U]=0$ and from the facts that $U$ is horizontal and the curvature tensor associated to the connection $\nabla$ vanishes. Using these formulas we get
\begin{align*}
\escpr{\nabla_U\nabla_U V,Z}&=4\,\escpr{J(U),Z}\escpr{\nabla_SU,Z}-4\,\escpr{J(U),S}\escpr{\nabla_ZU,Z}
\\
&\qquad +\mnh\escpr{\nabla_Z(\nabla_UU),Z}-2\,\escpr{J(\n_UU),S}-2\,\escpr{J(U),\n_SU}.
\end{align*}

Observe that
\[
\int_\Sg \big\{-2\,\escpr{J(\n_UU),S}+\mnh\escpr{\nabla_Z(\nabla_UU),Z}\big\}\,d\Sg=0
\]
by the first variation formula \eqref{eq:1st-0} applied to the horizontal $C^1$ vector field $\n_UU$. Hence we only need to consider the integral over $\Sg$ of the remaining summands
\begin{multline}
\label{eq:nuuv}
\frac{1}{\mnh}\big(4\,\escpr{U,\nuh}^2\escpr{N,T}^2+4\,\mnh\escpr{N,T}\escpr{U,\nuh}\,Z(\escpr{U,\nuh})+\mnh^2\,Z(\escpr{U,\nuh})^2\big)
\\
+4\,\escpr{J(U),Z}\escpr{\nabla_SU,Z}-4\,\escpr{J(U),S}\escpr{\nabla_ZU,Z}-2\,\escpr{J(U),\n_SU}.
\end{multline}

Letting $f=\escpr{U,\nuh}$, the second term can be written as
\[
2\mnh\frac{\escpr{N,T}}{\mnh}\,Z(f^2)=2\mnh\,\bigg(Z\bigg(\ntnh f^2\bigg)-Z\bigg(\ntnh\bigg)\,f^2\bigg).
\]
Integration by parts ($\int_\Sg\big(Z(h)+2\tfrac{\escpr{N,T}}{\mnh}h\big)\,\mnh d\Sg=0$) implies that the integral of the first two terms in \eqref{eq:nuuv} is equal to
\begin{equation}
\label{eq:1}
\int_\Sg \bigg(4\frac{\escpr{N,T}^2}{\mnh^2}f^2+2\ntnh Z(f^2)\bigg)\,\mnh d\Sg=-2\int_\Sg Z\bigg(\ntnh\bigg) f^2\,(\mnh d\Sg).
\end{equation}

The integral in the third summand in \eqref{eq:nuuv} is equal to
\begin{equation}
\label{eq:2}
\int_\Sg Z(f)^2\,(\mnh d\Sg).
\end{equation}

The last three terms can be treated using Lemma~\ref{lem:approx}. We approximate $\Sg$ by a family of $C^\infty$ surfaces $\Sg_i$ uniformly on the support of $U$. Let $H_i$ be the mean curvature of $\Sg_i$. From \cite[Lemma~6.2]{MR3044134}, $H_i\to 0$ uniformly and $Z_i(\escpr{N_i,T}/|(N_i)_h|)$ uniformly converges to $Z(\escpr{N,T}/|N_h|$. Then the convergence of the integrals and Lemma~\ref{lem:approx} imply
\begin{multline}
\label{eq:3}
\int_\Sg \big\{4\,\escpr{J(U),Z}\escpr{\nabla_SU,Z}-4\,\escpr{J(U),S}\escpr{\nabla_ZU,Z}-2\,\escpr{J(U),\n_SU}\big\}\,\mnh d\Sg
\\
=-\int_\Sg\bigg(2Z\bigg(\frac{\escpr{N,T}}{\mnh}\bigg)+4\frac{\escpr{N,T}^2}{\mnh^2}\bigg)\,f^2\,\mnh d\Sg.
\end{multline}

Using \eqref{eq:1}, \eqref{eq:2} and \eqref{eq:3} to evaluate \eqref{eq:nuuv} we obtain the second variation formula \eqref{eq:2ndvar}.
\end{proof}

\begin{lemma}
\label{lem:approx}
Let $\Sg\subset\hh^1$ be an oriented surface of class $C^2$ with mean curvature $H$. Let $U$ be a $C^1$ vector field in $\hh^1$ with compact support so that $\text{supp}(U)\cap\Sg_0=\emptyset$, and let $f=\escpr{U,\nuh}$, $g=\escpr{U,Z}$. Then
\begin{multline}
\label{eq:approx}
\int_\Sg \big\{4\,\escpr{J(U),Z}\escpr{\nabla_SU,Z}-4\,\escpr{J(U),S}\escpr{\nabla_ZU,Z}-2\,\escpr{J(U),\n_SU}\big\}\,\mnh d\Sg
\\
=-\int_\Sg\bigg(2Z\bigg(\frac{\escpr{N,T}}{\mnh}\bigg)+4\frac{\escpr{N,T}^2}{\mnh^2}\bigg)\,f^2\,\mnh d\Sg+2\int_\Sg \ntnh Hfg\,(\mnh d\Sg).
\end{multline}
\end{lemma}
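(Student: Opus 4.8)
The plan is to establish \eqref{eq:approx} by a direct computation on the $C^2$ surface $\Sg$ in the adapted orthonormal frame $\{Z,\nuh,T\}$, valid on $\supp(U)\subset\Sg\setminus\Sg_0$, followed by two integrations by parts. First I would write $U=f\,\nuh+g\,Z+\escpr{U,T}\,T$ and use $J(\nuh)=Z$, $J(Z)=-\nuh$, $J(T)=0$ to get $J(U)=f\,Z-g\,\nuh$, so that $\escpr{J(U),Z}=f$ and $\escpr{J(U),S}=-\nt\,g$. Since $\n T\equiv 0$ and $\{Z,\nuh\}$ is orthonormal, all the covariant derivatives entering the left-hand side are governed by only two structure coefficients, $H=\escpr{\n_Z\nuh,Z}$ and $\sigma:=\escpr{\n_S\nuh,Z}$: one has $\n_Z\nuh=H\,Z$, $\n_Z Z=-H\,\nuh$, $\n_S\nuh=\sigma\,Z$ and $\n_S Z=-\sigma\,\nuh$, whence $\escpr{\n_Z U,Z}=Z(g)+Hf$, $\escpr{\n_S U,Z}=S(g)+\sigma f$ and $\escpr{\n_S U,\nuh}=S(f)-\sigma g$. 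Substituting into the three summands, the $\sigma$-contributions and the mixed terms recombine and the bracketed integrand collapses to
\[
2\,\sigma\,(f^2-g^2)+2\,S(fg)+2\,\nt\,Z(g^2)+4\,\nt\,H\,fg .
\]

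The crucial geometric input is the value of the structure coefficient $\sigma$. It is not intrinsic to $\Sg$, so I would compute it by passing to the Levi-Civita connection $D$ through the difference formula of Subsection~\ref{subsec:nabla}: writing $\nuh=\nt\,S+\mnh\,N$ and using the symmetry of the Levi-Civita second fundamental form together with the already known relation $\escpr{\n_Z\nuh,Z}=H$, one finds
\[
\sigma=-\mnh\,\bigg(Z\bigg(\ntnh\bigg)+2\,\ntnhtwo\bigg),
\]
which is precisely the coefficient $c_\eps$ appearing in Lemma~\ref{lem:parameterization}(4). The same computation yields the tangential divergence $\divv_\Sg S=\nt\,H$, needed below.

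Next I would integrate by parts. The purely horizontal term $2\,\nt\,Z(g^2)$ is treated with Lemma~\ref{lem:divergence}, i.e.\ with the identity $\int_\Sg\big(Z(\phi)+2\,\ntnh\,\phi\big)\,\mnh\,d\Sg=0$; the tangential term $2\,S(fg)$ is treated with the Riemannian surface Divergence Theorem applied to the compactly supported field $2\,fg\,S$, which contributes the mean-curvature term through $\divv_\Sg S=\nt\,H$. Substituting the value of $\sigma$, the $g^2$-contributions cancel, the $f^2$-contributions collect into $-\big(2Z(\ntnh)+4\,\ntnhtwo\big)f^2$, and the remaining $fg$-contributions collect into the single term $2\,\ntnh\,H\,fg$, giving exactly the right-hand side of \eqref{eq:approx}. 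As an alternative, one may obtain the $C^2$ identity by quoting the corresponding computation of \cite[Prop.~6.3]{MR3044134}, exactly as was done for Lemma~\ref{lem:divergence}.

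I expect the main obstacle to be two-fold. First, the identification of $\sigma=\escpr{\n_S\nuh,Z}$: because $\nuh$ is transverse to $\Sg$, this coefficient cannot be read off intrinsically and must be extracted from the ambient geometry via the $\n$--$D$ comparison and the symmetry of the second fundamental form, which is where the combination $Z(\ntnh)+2\,\ntnhtwo$ is produced. Second, the integration-by-parts bookkeeping: the Divergence Theorem available in the paper (Lemma~\ref{lem:divergence}) only covers the horizontal $Z$-direction, so the tangential $S$-term must be handled separately, and one has to track the weight $\mnh$ and the divergence $\divv_\Sg S=\nt\,H$ carefully so that all $g^2$-terms cancel and the coefficient of $H\,fg$ comes out to be exactly $2\,\ntnh$.
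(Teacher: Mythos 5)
Your proposal is correct and takes essentially the same route as the paper's proof: the bracketed expression collapses to exactly the integrand the paper writes, $2S(fg)+2\,\escpr{\n_S\nuh,Z}(f^2-g^2)+2\,\nt\,Z(g^2)+4\,\nt\,Hfg$, and \eqref{eq:approx} then follows from the identity $\escpr{\n_S\nuh,Z}=-\mnh\big(Z\big(\ntnh\big)+2\,\ntnhtwo\big)$ together with the same two integrations by parts (Lemma~\ref{lem:divergence} in the $Z$-direction, and $\int_\Sg(S(h)+h\,\nt\,H)\,d\Sg=0$ in the $S$-direction). The only difference is cosmetic: you derive the value of $\escpr{\n_S\nuh,Z}$ and the identity $\divv_\Sg S=\nt\,H$ from the $\n$--$D$ comparison, whereas the paper simply cites \cite[Lemmas~3.1(v) and~3.3]{MR3044134}.
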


\begin{proof}
The integrand in the left integral of \eqref{eq:approx} is given by
\[
2f^2\escpr{\n_S\nuh,Z}+2 S(fg)+4\escpr{N,T}H(fg)+4\escpr{N,T}gZ(g) -2g^2\escpr{\n_S\nuh,Z},
\]
where $H$ is the mean curvature of the surface. Taking into account that
\[
\escpr{\n_S\nuh,Z}=-\mnh Z\bigg(\frac{\escpr{N,T}}{\mnh}\bigg)-2\mnh\frac{\escpr{N,T}^2}{\mnh^2},
\]
see \cite[Lemma~3.1 (v)]{MR3044134} or \cite[Lemma~3.11 (vii)-(viii)]{Gaphd}, we immediately see that the integrals of the last two summands equal
\[
\int_\Sg \bigg(2\ntnh Z(g^2)+2g^2 Z\bigg(\ntnh\bigg) +4\frac{\escpr{N,T}^2}{\mnh^2}g^2 \bigg) (\mnh d\Sg)=0.
\]
To treat the second and third summand we take into account formula $\int_\Sg (S(h)+h\escpr{N,T}H)\,d\Sg=0$, see \cite[Lemma~3.3]{MR3044134}, so that
\[
\int_\Sg \big(2S(fg)+4\escpr{N,T} H (fg)\big)\,d\Sg=2\int_\Sg \ntnh Hfg\,(\mnh d\Sg).
\]
This implies \eqref{eq:approx}.
\end{proof}

Now we introduce the stability operator $\mathcal{Q}(f,f)$.

\begin{theorem}
\label{thm:stabilityoperator}
Let $\Sg\subset\hh^1$ be a complete oriented area-stationary surface of class $C^1$. If $\Sg$ is stable, then the operator 
\begin{equation}
\label{eq:stabilityoperator}
\mathcal{Q}(f,f):=\int_\Sg \big\{Z(f)^2-qf^2\big\}\, (|N_h|\,d\Sg),
\end{equation}
is non-negative for all functions $f\in C^0(\Sg\setminus\Sg_0)$  with compact support of class $C^1$ in the horizontal direction in $\Sg$,
where $q$ is the function defined in \eqref{eq:q}.
\end{theorem}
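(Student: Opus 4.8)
The goal is to upgrade the defining stability inequality, which is assumed only for functions that are continuous on all of $\Sg$ and have continuous characteristic derivative, to the larger class of the statement: functions $f$ merely continuous on the regular set $\Sg\setminus\Sg_0$, compactly supported, and of class $C^1$ in the $Z$-direction. The plan is to exhibit each such $f$ as a limit of admissible test functions for the stability definition and to pass the inequality to the limit in the quadratic form $\mathcal{Q}$. Note that the second variation formula of Theorem~\ref{thm:second} is exactly what identifies $\mathcal{Q}(f,f)$ as the (nonnegative, under stability) second derivative of the area, so no new computation of the integrand is needed here.

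First I would dispose of the case in which $\supp(f)$ is a compact subset of the open set $\Sg\setminus\Sg_0$. Extending $f$ by zero then yields a function that is continuous on all of $\Sg$, compactly supported, and whose derivative $Z(f)$ exists and is continuous, vanishing in a neighborhood of $\Sg_0$; moreover $q$ and $|N_h|$ are continuous on $\supp(f)$, so the integrand $\{Z(f)^2-qf^2\}\,|N_h|$ is continuous and compactly supported and $\mathcal{Q}(f,f)$ is a genuinely convergent integral. Such an $f$ is precisely an admissible competitor in the definition of stability, whence $\mathcal{Q}(f,f)\ge 0$ with no further work. For a general $f$ whose support may cluster on $\Sg_0$, I would fix a family of Lipschitz cutoffs $\eta_k$ on $\Sg$, equal to $1$ outside a neighborhood of $\Sg_0$ of size $1/k$ and vanishing on a smaller neighborhood, and set $f_k:=\eta_k f$. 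Each $f_k$ falls under the previous case, so $\mathcal{Q}(f_k,f_k)\ge 0$; writing $Z(f_k)=\eta_k\,Z(f)+f\,Z(\eta_k)$ and using that $\Sg_0$ carries no Riemannian area, I would show $\mathcal{Q}(f_k,f_k)\to\mathcal{Q}(f,f)$ and conclude $\mathcal{Q}(f,f)\ge 0$.

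The main obstacle is precisely this limiting step near $\Sg_0$. Since $q$ blows up like $|N_h|^{-2}$ while the weight is $|N_h|$, the density $q\,f^2\,|N_h|$ degenerates like $|N_h|^{-1}$, so the convergence $\int_\Sg q\,f_k^2\,(|N_h|\,d\Sg)\to\int_\Sg q\,f^2\,(|N_h|\,d\Sg)$ and the vanishing of the cross term $\int_\Sg f^2\,Z(\eta_k)^2\,(|N_h|\,d\Sg)$ are not automatic: they demand that $f$ decay, and the cutoffs be chosen, fast enough relative to the degeneration of $|N_h|$ in order to apply dominated convergence. I note, however, that in the main application to stable surfaces the surface is assumed to have no singular points at all, so $\Sg\setminus\Sg_0=\Sg$, every admissible $f$ automatically has support in the regular set, and the first case alone already delivers the full conclusion.
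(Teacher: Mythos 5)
There is a genuine gap here, and it is one of misidentifying what the theorem actually asserts. You have taken the displayed stability inequality at face value as the \emph{definition} of stability, so that for you the theorem reduces to a zero-extension remark (your first case) plus a cutoff argument near $\Sg_0$ (your second case) --- and the second case both addresses a difficulty the statement does not pose, since the supports in the theorem are compactly contained in the open set $\Sg\setminus\Sg_0$, and is in any event left unresolved by your own admission (the obstruction $q\,|N_h|\sim|N_h|^{-1}$ is flagged but not overcome). Under your reading the theorem is essentially vacuous. The operative meaning of stability, as the paper's proof makes clear, is variational: the second derivative of the area is non-negative along flows of admissible vector fields, and Theorem~\ref{thm:second} computes that derivative as $\mathcal{Q}(f,f)$ \emph{only} for $f=\escpr{U,\nu_h}$ with $U$ a horizontal $C^2$ vector field of compact support. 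The real content of the theorem --- entirely absent from your proposal --- is that an arbitrary $f\in C^0(\Sg\setminus\Sg_0)$ which is merely $C^1$ in the $Z$-direction need not arise as $\escpr{U,\nu_h}$ for such a $U$: on a $C^1$ surface $\nu_h$ is only continuous (smooth just along characteristic lines), so the naive field $f\,\nu_h$ is not an admissible competitor in Theorem~\ref{thm:second}, and your appeal to ``no new computation is needed'' begs precisely this question.

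The paper supplies the missing construction by approximating the variation field rather than the test function's support: write $\nu_h=gX+hY$ with $g^2+h^2=1$, extend $f,g,h$ continuously to a neighborhood of $\Sg$, mollify to obtain smooth $f_\eps,g_\eps,h_\eps$, and set $U_\eps=f_\eps(g_\eps X+h_\eps Y)$, which is a genuine $C^\infty$ horizontal field with compact support in $\Sg\setminus\Sg_0$. Then $\escpr{U_\eps,\nu_h}=f_\eps(g_\eps g+h_\eps h)$, and stability applied to the flow of $U_\eps$ together with \eqref{eq:2ndvar} gives $0\le\int_\Sg\{Z(f_\eps(g_\eps g+h_\eps h))^2-q\,(f_\eps(g_\eps g+h_\eps h))^2\}\,(|N_h|\,d\Sg)$. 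Letting $\eps\to 0$ is unproblematic here --- the supports stay in a fixed compact subset of the regular set, where $q$ and $|N_h|^{-1}$ are bounded --- and since $g^2+h^2=1$ the limit is exactly $\mathcal{Q}(f,f)\ge 0$. So the limiting procedure the theorem requires is a mollification of $f$ and of the components of $\nu_h$ to manufacture admissible variations, not the near-singular-set cutoff you proposed; without some such approximation of the vector field, your argument establishes only a restatement of the definition, not the theorem.
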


\begin{proof}
We express $\nu_h$ as $\nu_h=gX+hY$, with $g,h$ continuous functions on $\Sg$, differentiable in the $Z$-direction and satisfying $g^2+h^2=1$. 
Let $\tilde{f},\tilde{g},\tilde{h}$ continuous extensions of $f,g,h$ in a neighborhood of $\Sg$ and  let $\tilde{f}_\eps,\tilde{g}_\eps,\tilde{h}_\eps$ the standard mollifiers of  $\tilde{f},\tilde{g},\tilde{h}$. 

Since the vector fields $U_\eps=f_\eps(g_\eps X+h_\eps Y)\in C^\infty_0(\Sg\setminus\Sg_0)$ are horizontal, the non-negativity of the second variation induced by $U_\eps$ implies 
\[
\begin{split}
0\leq & \lim\limits_{\eps\rightarrow 0}\int_\Sg \big\{Z(f_\eps(g_\eps g+h_\eps h))^2-q(f_\eps(g_\eps g+h_\eps h))^2\big\}\, (|N_h|\,d\Sg)\\
&= \int_\Sg \big\{Z(f ( g^2+h^2 ))^2-q(f(g^2+h^2))^2\big\}\, (|N_h|\,d\Sg)= \mathcal{Q}(f,f),
\end{split}
\]
that allows us to conclude $\mathcal{Q}(f,f)\geq 0$.
\end{proof}

\section{Stable $C^1$ surfaces without singular points}
\label{sec:second}

In this Section we shall prove the following result

\begin{theorem}
\label{thm:main}
Let $\Sg\subset\hh^1$ be a complete oriented stable surface of class $C^1$ without singular points. Then $\Sg$ is a vertical plane.
\end{theorem}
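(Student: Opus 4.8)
The plan is to combine the parameterization from Lemma~\ref{lem:parameterization} with the stability inequality $\mathcal{Q}(f,f)\ge 0$ from Theorem~\ref{thm:stabilityoperator}, reducing the problem to showing that the function $q$ must vanish identically on $\Sg$; once $q\equiv 0$, Lemma~\ref{lem:codazzi}(2) immediately forces $\Sg$ to be a vertical plane. So the entire thrust is to prove that stability is incompatible with $q$ being positive along any characteristic geodesic.

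First I would fix a seed curve $\Ga:I\to\Sg$ and work with the global parameterization $F(\eps,s)=\Ga(\eps)+s\,Z_{\Ga(\eps)}$, whose Jacobian and orthogonality properties are controlled by Lemma~\ref{lem:parameterization}. The key structural input is Lemma~\ref{lem:p}: along each characteristic geodesic the polynomial $p(s)=a+bs+cs^2$ has no real root (because $b^2-4ac<0$ by \eqref{eq:b^2-4ac}), and the function $v(s)=|p(s)|^{1/2}$ is everywhere positive and satisfies the Riccati-type identity \eqref{eq:v}. The strategy is to use $v^{-1}$, suitably cut off, as the horizontal factor of a test function. Concretely I would build $f$ on $\Sg$ so that along each geodesic $\ga_\eps$ it looks like $\phi(\eps)\,v_\eps(s)^{-1}\chi(s)$, where $\chi$ is a logarithmic cutoff in the arc-length variable $s$ and $\phi$ handles the $\eps$-direction. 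Plugging such an $f$ into $\mathcal{Q}(f,f)$, changing variables by the Jacobian \eqref{eq:jacobian}, and using \eqref{eq:v} to rewrite $\int (Z(v^{-1})^2-q v^{-2})$, the bulk terms should cancel, leaving only boundary contributions coming from the derivative of the cutoff $\chi$. These boundary contributions tend to zero as the cutoff is spread out over $\rr$, exactly the standard logarithmic-cutoff trick that shows a complete geodesic cannot carry a positive potential $q$ while remaining stable.

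The main obstacle I anticipate is twofold. The analytic difficulty is that the surface is only $C^1$, so the parameterization $F$ is merely differentiable for almost every $\eps$ (Lemma~\ref{lem:parameterization}(1)--(2)); I cannot freely integrate by parts in the $\eps$-direction. The remedy is to localize all of the integration by parts in the $s$-direction, where everything is smooth (the restriction of $u=\escpr{N,T}/|N_h|$ to each geodesic solves the ODE \eqref{eq:codazzi} by Lemma~\ref{lem:parameterization}(3)), and to treat the $\eps$-variable purely as an a.e.-defined measure parameter via Fubini, never differentiating in $\eps$. The geometric difficulty is ensuring that the test function genuinely has \emph{compact support} on $\Sg$: since the geodesics are complete straight lines, I must check that the cutoff can be chosen to make $\mathcal{Q}(f,f)<0$ whenever $q$ is positive somewhere, which is where the completeness hypothesis enters decisively. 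If $q(\ga(s_0))>0$ on some geodesic, then by Lemma~\ref{lem:codazzi}(1) in fact $q>0$ on that entire geodesic, and the integral $\int_\rr v^{-2}\,ds$ diverges logarithmically in a way that makes the cutoff construction produce a negative value of $\mathcal{Q}$, contradicting stability.

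Thus the proof would conclude as follows: stability via Theorem~\ref{thm:stabilityoperator} combined with the test-function estimate forces $q(\ga(s))\equiv 0$ along every characteristic geodesic, hence $q\equiv 0$ on all of $\Sg$ since the regular geodesics foliate $\Sg$ (there are no singular points by hypothesis and the foliation is given by Theorem~\ref{thm:c1foliation}). By Lemma~\ref{lem:codazzi}(2), $q\equiv 0$ implies that $\Sg$ is a vertical plane, which is the desired conclusion. The delicate point to write carefully is the passage from the pointwise identity \eqref{eq:v} to a global integral inequality on $\Sg$, since it requires the Jacobian change of variables \eqref{eq:jacobian} to remain valid almost everywhere and the divergence-type cancellation (in the spirit of Lemma~\ref{lem:divergence}) to hold only in the horizontal $Z$-direction.
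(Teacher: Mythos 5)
Your skeleton coincides with the paper's proof (which follows the $C^2$ argument of \cite{MR2609016}): test functions of the form $uv^{-1}$ with $v_\eps=|p_\eps|^{1/2}$ from Lemma~\ref{lem:p}, integration by parts only in the $Z$-direction via Lemma~\ref{lem:divergence}, the a.e.\ Jacobian change of variables of Lemma~\ref{lem:parameterization}, and the endgame $q\equiv 0$ plus Lemma~\ref{lem:codazzi}(2). But there is a genuine error at the decisive step, namely the mechanism that is supposed to produce instability. After inserting $f=uv^{-1}$ and integrating by parts via \eqref{eq:div} with $f=u^2$, $g=v^{-2}$, the identity \eqref{eq:v} does \emph{not} make the bulk terms cancel: it leaves the strictly negative residual $-\tfrac34\,q\,u^2v^{-2}$, i.e.
\[
\mathcal{Q}(uv^{-1},uv^{-1})=\int_\Sg v^{-2}\Big\{Z(u)^2-\tfrac{3}{4}\,q\,u^2\Big\}\,(|N_h|\,d\Sg),
\]
which is \eqref{eq:indexform2}. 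If the cancellation were complete, as you assert (``leaving only boundary contributions \dots\ these tend to zero''), then spreading the cutoff would only give $\mathcal{Q}\to 0$, which is perfectly compatible with stability and yields no contradiction at all. It is precisely the surviving $-\tfrac34 q$ term that drives the proof: after the change of variables \eqref{eq:area-element}, where the weight $|N_h|\,d\Sg=|p_\eps(s)|\,d\eps\,ds=v^2\,d\eps\,ds$ cancels $v^{-2}$ exactly, one gets the flat expression \eqref{eq:indexform3}, $\mathcal{Q}=\int(\ptl u/\ptl s)^2\,d\eps\,ds-\tfrac34\int q\,u^2\,d\eps\,ds$; a stretched cutoff $u_k(\eps,s)=\phi(\eps)\phi(s/k)$ (no logarithmic cutoff is needed) kills the energy term at rate $1/k$, while Fatou's lemma bounds the potential term below by a positive constant under the assumption $q\circ F>0$, forcing $\mathcal{Q}<0$ against stability.

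Your quantitative rescue is also false. If $q>0$ along a geodesic, then $c_\eps=-|N_h|\big(Z\big(\tfrac{\escpr{N,T}}{|N_h|}\big)+2\,\tfrac{\escpr{N,T}^2}{|N_h|^2}\big)=-|N_h|\big(\tfrac{q}{4}+\tfrac{\escpr{N,T}^2}{|N_h|^2}\big)<0$ and $b_\eps^2-4a_\eps c_\eps<0$ by \eqref{eq:b^2-4ac}, so $v(s)^2=|p_\eps(s)|$ grows \emph{quadratically} in $s$ and $\int_\rr v^{-2}\,ds$ converges: there is no logarithmic divergence, nor would such a divergence by itself create negativity if the bulk really cancelled. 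Everything else in your outline is sound and matches the paper --- treating $\eps$ as an a.e.\ measure parameter via Fubini without ever differentiating in $\eps$, localizing all integration by parts in the $s$-direction where Lemma~\ref{lem:parameterization}(3) guarantees smoothness, the dichotomy of Lemma~\ref{lem:codazzi}(1) along each geodesic, and completeness to have the characteristic lines defined on all of $\rr$. The gap would close itself the moment you carried out the substitution of \eqref{eq:v} explicitly and tracked the $-\tfrac34 q$ residual, but as written both the claimed cancellation and the divergence claim are wrong, and the contradiction with stability does not follow from your stated mechanism.
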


\begin{proof}
We shall follow the proof of \cite[Thm.~4.7]{MR2609016}%
. Inserting a test function of the form $uv^{-1}$ in the index form \eqref{eq:2ndvar} and using formula \eqref{eq:div} with $f=u^2$, g$=v^{-2}$ we get
\begin{equation}
\label{eq:indexform1}
\begin{split}
\mathcal{Q}(uv^{-1}&,uv^{-1})=\int_\Sg v^{-2}Z(u^2)\,(|N_h|\,d\Sg)
\\
&+\int_\Sg u^2\,\bigg\{Z(v^{-1})^2-\frac{1}{2}\,Z(Z(v^{-2}))-\frac{\escpr{N,T}}{|N_h|}\,Z(v^{-2})\bigg\}\,(|N_h|\,d\Sg)
\\
&-\int_{\Sg} q\,(uv^{-1})^2 (|N_h|\,d\Sg).
\end{split}
\end{equation}

Let $\Ga:I\to\Sg$ be a $C^1$ seed curve (integral curve of $S$) defined on an open interval $I\subset\rr$. Consider the parameterization $F:I\times\rr\to\Sg$ given by $F(\eps,s):=\Ga(\eps)+s\,Z_{\Ga(\eps)}$ introduced in Lemma~\ref{lem:parameterization}, and let $\ga_\eps(s)=F(\eps,s)$. 

For every $\eps$ we consider the polynomial $p_\eps(s)=p(\eps,s)=a_\eps+b_\eps s+c_\eps s^2$ defined in Lemma~\ref{lem:p}, with coefficients
\[
a_\eps=-|N_h|(\ga_\eps(0)),\quad b_\eps=-2\escpr{N,T}(\ga_\eps(0)), \quad c_\eps=-|N_h|\,\bigg(Z\bigg(\frac{\escpr{N,T}}{|N_h|}\bigg)+2\,\frac{\escpr{N,T}^2}{|N_h|^2}\bigg)(\ga_\eps(0)).
\]
The function $p(\eps,s)$ is continuous since the coefficients $a_\eps$, $b_\eps$. $c_\eps$ depend continuously on $\eps$. The first two ones because of the continuity of the unit normal vector $N$. The latter one because of Lemma~\ref{lem:parameterization}.

Henceforth we assume $q\circ F>0$ on $I\times\rr$. Lemma~\ref{lem:p} then implies $p_\eps<0$ for all $\eps$. We define the function $v_\eps(s)=v(\eps,s)=|p_\eps(s)|^{1/2}$. Lemma~\ref{lem:p} implies that $v_\eps$ satisfies equation \eqref{eq:v} on the geodesic $\ga=\ga_\eps$. For this particular function $v$, \eqref{eq:indexform1} and \eqref{eq:v} imply
\begin{equation}
\label{eq:indexform2}
\mathcal{Q}(uv^{-1},uv^{-1})=\int_\Sg v^{-2}\,\big\{Z(u^2)-\frac{3}{4}\,q\,u^2\big\}\,(|N_h|\,d\Sg).
\end{equation}

For almost every $\eps$, the Jacobian of $F$ equals $|V_\eps(s)|=|N_h(\ga_\eps(s))|^{-1}|\escpr{V_\eps(s),T_{\ga_\eps(s)}}|$ by Lemma~\ref{lem:parameterization}. Hence the sub-Riemannian area element on $\Sg$ takes the form
\begin{equation}
\label{eq:area-element}
|N_h|\,d\Sg =|\escpr{V_\eps(s),T_{\ga_\eps(s)}}|\,d\eps\,ds.
\end{equation}
From Lemma~\ref{lem:parameterization}(4), $|\escpr{V_\eps(s),T_{\ga_\eps(s)}}|=|v|$ for almost all $\eps$. Hence \eqref{eq:indexform2} can be written as
\begin{equation}
\label{eq:indexform3}
\mathcal{Q}(uv^{-1},uv^{-1})=\int_{I\times\rr}\bigg(\frac{\ptl u}{\ptl s}\bigg)^2d\eps\,ds-\frac{3}{4}\int_{I\times\rr} q\,u^2\,d\eps\,ds.
\end{equation}
Once we have obtained this second variation formula, we just make a choice of test function as in the proof of Theorem~4.7 in \cite[p,~591]{MR2609016}: take a non-negative $C^\infty$ function $\phi:I\to\rr$ with $\phi(0)>0$ and compact support contained inside a bounded interval $I'\subseteq I$.  Let $\ell:=\text{length}(I')$, and $M$ a positive constant so that $|\phi'(\eps)|\leq M$ for all $\eps\in I$. For any $k\in\mathbb{N}$ we define the function
\[
u_{k}(\eps,s):=\phi(\eps)\,\phi(s/k).
\]
so that $u_{k}\in C_{0}(I'\times kI')$, and $u_{k}$ is $C^\infty$ with respect to $s$.  By Fubini's Theorem
\[
\int_{I\times\rr}\left(\frac{\ptl u_{k}}{\ptl s}\right)^2d\eps\,ds=
\frac{1}{k^2}\left(\int_{I'}\phi(\eps)^2\,d\eps\right)
\,\left(\int_{kI'}\phi'(s/k)^2\,ds\right)\leq\frac{\ell M^2}{k}
\int_{I'}\phi(\eps)^2\,d\eps,
\]
which goes to $0$ when $k\to\infty$.  Note also that $\{u_{k}\}_{k\in\mathbb{N}}$ pointwise converges when $k\to\infty$ to $u(\eps,s)=\phi(0)\,\phi(\eps)$.  Since $q\ge 0$ by Lemma~\ref{lem:codazzi}(1), we can apply Fatou's lemma to obtain
\[
\liminf_{k\to\infty}\int_{I\times\rr}q\,u^2_{k}\,d\eps\,ds
\geq\int_{I\times\rr}q\,u^2\,d\eps\,ds.
\]
We conclude from \eqref{eq:indexform3} that
\begin{equation*}
\limsup_{k\to\infty}\,\mathcal{Q}(u_{k}v^{-1},u_{k}v^{-1})=
-\frac{3}{4}\,\liminf_{k\to\infty}\int_{I\times\rr}q\,u^2_{k}\,
d\eps\,ds\leq-\frac{3}{4}\int_{I\times\rr}q\,
u^2\,d\eps\,ds,
\end{equation*}
which is strictly negative by our assumption $q\circ F>0$ on $I\times\rr$. This way we obtain instability, a contradicting arising from the assumption $q\circ F>0$. We conclude $q\equiv 0$ on $\Sg$. By Lemma~\ref{lem:codazzi}(2), $\Sg$ is a vertical plane.
\end{proof}

A trivial consequence of Theorem~\ref{thm:main} is the following

\begin{corollary}[Bernstein Theorem for intrinsic graphs of $C^1$ functions]
\label{cor:bernstein}
Let $\Sg\subset\hh^1$ be a complete locally area-minimizing intrinsic graph of a $C^1$ function. Then $\Sg$ is a vertical plane.
\end{corollary}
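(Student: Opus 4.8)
The plan is to verify that a complete locally area-minimizing intrinsic graph of a $C^1$ function satisfies every hypothesis of Theorem~\ref{thm:main}, so that the conclusion follows at once. Thus the work reduces to three checks: that $\Sg$ has no singular points, that it is orientable, and that local minimality upgrades to stability (and area-stationarity) in the precise sense required by Section~\ref{sec:2ndvariation}.

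First I would recall, as noted in the introduction, that an intrinsic graph of a $C^1$ function has empty singular set. This is immediate from the explicit computation in the proof of Theorem~\ref{thm:c1foliation}: after a rotation about the $t$-axis placing the base plane at $\{y=0\}$, the surface is a vertical graph $G_u$ with horizontal normal proportional to $\widetilde{\nu}_h=(u_x+2uu_t)\,X-Y$, whose $Y$-component is identically $-1$ and therefore never vanishes. Hence $|N_h|>0$ everywhere and $\Sg_0=\emptyset$. Being a Riemannian graph, $\Sg$ is moreover orientable, with a globally defined unit normal $N$.

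Next I would observe that local area-minimality delivers both stationarity and stability for free. If $\Sg$ minimizes the sub-Riemannian area $A$ among compactly supported competitors, then for any $C^2$ horizontal vector field $U$ with compact support and associated flow $\{\varphi_s\}$, the function $s\mapsto A(\varphi_s(\Sg))$ has a local minimum at $s=0$; its first derivative vanishes, so $\Sg$ is area-stationary in the sense of Section~\ref{sec:first}, and its second derivative is non-negative. By the second variation formula \eqref{eq:2ndvar} this gives $\int_\Sg\{Z(f)^2-qf^2\}\,(|N_h|\,d\Sg)\ge 0$ with $f=\escpr{U,\nu_h}$, and the mollification/density argument of Theorem~\ref{thm:stabilityoperator} promotes this to $\mathcal{Q}(f,f)\ge 0$ for every admissible test function. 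Consequently $\Sg$ is a complete, oriented, stable area-stationary surface of class $C^1$ without singular points, and Theorem~\ref{thm:main} forces $\Sg$ to be a vertical plane.

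I do not expect any genuine obstacle: the corollary is essentially a matter of confirming hypotheses. The only point deserving slight care is reconciling the notion of stability demanded by Theorem~\ref{thm:main}, namely non-negativity of $\mathcal{Q}$ over the full admissible class, with the a priori weaker inequality produced directly by minimality along a single flow; this gap is precisely bridged by Theorem~\ref{thm:stabilityoperator}, so invoking it explicitly is the one step I would be careful to include.
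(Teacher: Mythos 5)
Your proposal is correct and follows exactly the route the paper takes: the paper states Corollary~\ref{cor:bernstein} as a ``trivial consequence'' of Theorem~\ref{thm:main}, the implicit verifications being precisely the ones you spell out (empty singular set of a $C^1$ intrinsic graph via the nonvanishing $Y$-component of $\widetilde{\nu}_h$, orientability of a graph, and the passage from local minimality to stationarity and stability through Theorems~\ref{thm:second} and~\ref{thm:stabilityoperator}). Your write-up simply makes explicit what the paper leaves to the reader, and does so correctly.
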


\begin{remark}
There are many examples of $t$-graphs of class $C^1$ with singular points which are area-minimizing in the Heisenberg group $\hh^1$ \cite{MR2448649}.
\end{remark}

\begin{remark}
Monti, Serra-Casanno and Vittone \cite{MR2455341} gave an example of a complete area-minimi\-zing $C^1$ surface, which is an intrinsic graph except over a vertical plane  of a function $C^\infty$ out of a given line, and it is not a vertical plane. Such a surface is one of the examples of area-minimizing $t$-graphs given in \cite{MR2448649}: $u_\la(x,y)=xy-\la^{-1}x|x|$, $\la>0$. It can be easily proved that such surfaces are the intrinsic graphs of the functions $v_\la(y,t)=\text{sgn}(t)\,\sqrt{\la|t|}$ of class $C^{0,1/2}$ over the vertical plane $x=0$. The graph of $v_\la$ is not $\hh$-regular since its horizontal unit normal is not continuous at $t=0$.
\end{remark}

\bibliography{stablec1}

\begin{thebibliography}{10}

\bibitem{MR2223801}
L.~Ambrosio, F.~Serra~Cassano, and D.~Vittone.
\newblock Intrinsic regular hypersurfaces in {H}eisenberg groups.
\newblock {\em J. Geom. Anal.}, 16(2):187--232, 2006.

\bibitem{MR2021034}
Z.~M. Balogh.
\newblock Size of characteristic sets and functions with prescribed gradient.
\newblock {\em J. Reine Angew. Math.}, 564:63--83, 2003.

\bibitem{MR2333095}
V.~{Barone Adesi}, F.~{Serra Cassano}, and D.~Vittone.
\newblock {The {B}ernstein problem for intrinsic graphs in {H}eisenberg groups
  and calibrations}.
\newblock {\em Calc. Var. Partial Differential Equations}, 30(1):17--49, 2007.

\bibitem{MR1421822}
A.~Bella{\"{\i}}che.
\newblock The tangent space in sub-{R}iemannian geometry.
\newblock In {\em Sub-{R}iemannian geometry}, volume 144 of {\em Progr. Math.},
  pages 1--78. Birkh\"auser, Basel, 1996.

\bibitem{MR2600502}
F.~Bigolin and F.~S. Cassano.
\newblock Distributional solutions of {B}urgers' equation and intrinsic regular
  graphs in {H}eisenberg groups.
\newblock {\em J. Math. Anal. Appl.}, 366(2):561--568, 2010.

\bibitem{MR2604618}
F.~Bigolin and F.~Serra~Cassano.
\newblock Intrinsic regular graphs in {H}eisenberg groups vs. weak solutions of
  non-linear first-order {PDE}s.
\newblock {\em Adv. Calc. Var.}, 3(1):69--97, 2010.

\bibitem{MR1874240}
D.~E. Blair.
\newblock {\em Riemannian geometry of contact and symplectic manifolds}, volume
  203 of {\em Progress in Mathematics}.
\newblock Birkh\"auser Boston, Inc., Boston, MA, 2002.

\bibitem{MR2583494}
L.~Capogna, G.~Citti, and M.~Manfredini.
\newblock Regularity of non-characteristic minimal graphs in the {H}eisenberg
  group {$\Bbb H^1$}.
\newblock {\em Indiana Univ. Math. J.}, 58(5):2115--2160, 2009.

\bibitem{MR2774306}
L.~Capogna, G.~Citti, and M.~Manfredini.
\newblock Smoothness of {L}ipschitz minimal intrinsic graphs in {H}eisenberg
  groups {$\Bbb H^n$}, {$n>1$}.
\newblock {\em J. Reine Angew. Math.}, 648:75--110, 2010.

\bibitem{MR1312686}
L.~Capogna, D.~Danielli, and N.~Garofalo.
\newblock The geometric {S}obolev embedding for vector fields and the
  isoperimetric inequality.
\newblock {\em Comm. Anal. Geom.}, 2(2):203--215, 1994.

\bibitem{MR2312336}
L.~Capogna, D.~Danielli, S.~D. Pauls, and J.~T. Tyson.
\newblock {\em An introduction to the {H}eisenberg group and the
  sub-{R}iemannian isoperimetric problem}, volume 259 of {\em Progress in
  Mathematics}.
\newblock Birkh\"auser Verlag, Basel, 2007.

\bibitem{MR2229062}
I.~Chavel.
\newblock {\em Riemannian geometry}, volume~98 of {\em Cambridge Studies in
  Advanced Mathematics}.
\newblock Cambridge University Press, Cambridge, second edition, 2006.
\newblock A modern introduction.

\bibitem{MR2103983}
J.-H. Cheng and J.-F. Hwang.
\newblock Properly embedded and immersed minimal surfaces in the {H}eisenberg
  group.
\newblock {\em Bull. Austral. Math. Soc.}, 70(3):507--520, 2004.

\bibitem{MR2165405}
J.-H. Cheng, J.-F. Hwang, A.~Malchiodi, and P.~Yang.
\newblock {Minimal surfaces in pseudohermitian geometry}.
\newblock {\em Ann. Sc. Norm. Super. Pisa Cl. Sci. (5)}, 4(1):129--177, 2005.

\bibitem{MR2983199}
J.-H. Cheng, J.-F. Hwang, A.~Malchiodi, and P.~Yang.
\newblock {A {C}odazzi-like equation and the singular set for {$C^1$} smooth
  surfaces in the {H}eisenberg group}.
\newblock {\em J. Reine Angew. Math.}, 671:131--198, 2012.

\bibitem{MR2262784}
J.-H. Cheng, J.-F. Hwang, and P.~Yang.
\newblock {Existence and uniqueness for {$p$}-area minimizers in the
  {H}eisenberg group}.
\newblock {\em Math. Ann.}, 337(2):253--293, 2007.

\bibitem{MR2481053}
J.-H. Cheng, J.-F. Hwang, and P.~Yang.
\newblock {Regularity of {$C^1$} smooth surfaces with prescribed {$p$}-mean
  curvature in the {H}eisenberg group}.
\newblock {\em Math. Ann.}, 344(1):1--35, 2009.

\bibitem{MR2405158}
D.~Danielli, N.~Garofalo, and D.~M. Nhieu.
\newblock A notable family of entire intrinsic minimal graphs in the
  {H}eisenberg group which are not perimeter minimizing.
\newblock {\em Amer. J. Math.}, 130(2):317--339, 2008.

\bibitem{MR2472175}
D.~Danielli, N.~Garofalo, D.~M. Nhieu, and S.~D. Pauls.
\newblock Instability of graphical strips and a positive answer to the
  {B}ernstein problem in the {H}eisenberg group {$\Bbb H^1$}.
\newblock {\em J. Differential Geom.}, 81(2):251--295, 2009.

\bibitem{MR2648078}
D.~Danielli, N.~Garofalo, D.-M. Nhieu, and S.~D. Pauls.
\newblock The {B}ernstein problem for embedded surfaces in the {H}eisenberg
  group {$\Bbb H^1$}.
\newblock {\em Indiana Univ. Math. J.}, 59(2):563--594, 2010.

\bibitem{MR0343011}
M.~Derridj.
\newblock Sur un th\'eor\`eme de traces.
\newblock {\em Ann. Inst. Fourier (Grenoble)}, 22(2):73--83, 1972.

\bibitem{MR2214654}
S.~Dragomir and G.~Tomassini.
\newblock {\em Differential geometry and analysis on {CR} manifolds}, volume
  246 of {\em Progress in Mathematics}.
\newblock Birkh\"auser Boston, Inc., Boston, MA, 2006.

\bibitem{MR1158660}
L.~C. Evans and R.~F. Gariepy.
\newblock {\em Measure theory and fine properties of functions}.
\newblock Studies in Advanced Mathematics. CRC Press, Boca Raton, FL, 1992.

\bibitem{MR657581}
G.~B. Folland and E.~M. Stein.
\newblock {\em {Hardy spaces on homogeneous groups}}, volume~28 of {\em
  {Mathematical Notes}}.
\newblock Princeton University Press, Princeton, N.J.; University of Tokyo
  Press, Tokyo, 1982.

\bibitem{MR1871966}
B.~Franchi, R.~Serapioni, and F.~{Serra Cassano}.
\newblock {Rectifiability and perimeter in the {H}eisenberg group}.
\newblock {\em Math. Ann.}, 321(3):479--531, 2001.

\bibitem{Gaphd}
M.~Galli.
\newblock {\em Area-stationary surfaces in contact sub-{R}iemannian manifolds}.
\newblock PhD thesis, Universidad de Granada, Available at
  http://hera.ugr.es/tesisugr/21013020.pdf, 2012.

\bibitem{MR3044134}
M.~Galli.
\newblock First and second variation formulae for the sub-{R}iemannian area in
  three-dimensional pseudo-{H}ermitian manifolds.
\newblock {\em Calc. Var. Partial Differential Equations}, 47(1-2):117--157,
  2013.

\bibitem{MR3259763}
M.~Galli.
\newblock On the classification of complete area-stationary and stable surfaces
  in the subriemannian {S}ol manifold.
\newblock {\em Pacific J. Math.}, 271(1):143--157, 2014.

\bibitem{1501.07246}
M.~Galli and M.~Ritor\'e.
\newblock {Regularity of $C^1$ surfaces with prescribed mean curvature in
  three-dimensional contact sub-Riemannian manifolds}.
\newblock \href{http://arxiv.org/abs/1501.07246}{arXiv:1501.07246}, 28 Jan
  2015.

\bibitem{MR1404326}
N.~Garofalo and D.-M. Nhieu.
\newblock Isoperimetric and {S}obolev inequalities for
  {C}arnot-{C}arath\'eodory spaces and the existence of minimal surfaces.
\newblock {\em Comm. Pure Appl. Math.}, 49(10):1081--1144, 1996.

\bibitem{MR1421823}
M.~Gromov.
\newblock Carnot-{C}arath\'eodory spaces seen from within.
\newblock In {\em Sub-{R}iemannian geometry}, volume 144 of {\em Progr. Math.},
  pages 79--323. Birkh\"auser, Basel, 1996.

\bibitem{MR0171038}
P.~Hartman.
\newblock {\em Ordinary differential equations}.
\newblock John Wiley \& Sons, Inc., New York-London-Sydney, 1964.

\bibitem{MR2609016}
A.~Hurtado, M.~Ritor{\'e}, and C.~Rosales.
\newblock The classification of complete stable area-stationary surfaces in the
  {H}eisenberg group {$\Bbb H^1$}.
\newblock {\em Adv. Math.}, 224(2):561--600, 2010.

\bibitem{MR1317384}
A.~Kor{\'a}nyi and H.~M. Reimann.
\newblock Foundations for the theory of quasiconformal mappings on the
  {H}eisenberg group.
\newblock {\em Adv. Math.}, 111(1):1--87, 1995.

\bibitem{MR1010168}
N.~J. Korevaar, R.~Kusner, and B.~Solomon.
\newblock The structure of complete embedded surfaces with constant mean
  curvature.
\newblock {\em J. Differential Geom.}, 30(2):465--503, 1989.

\bibitem{MR0119153}
P.~Libermann.
\newblock Sur les automorphismes infinit\'esimaux des structures symplectiques
  et des structures de contact.
\newblock In {\em Colloque {G}\'eom. {D}iff. {G}lobale ({B}ruxelles, 1958)},
  pages 37--59. Centre Belge Rech. Math., Louvain, 1959.

\bibitem{MR1865002}
R.~Monti and F.~Serra~Cassano.
\newblock Surface measures in {C}arnot-{C}arath\'eodory spaces.
\newblock {\em Calc. Var. Partial Differential Equations}, 13(3):339--376,
  2001.

\bibitem{MR2455341}
R.~Monti, F.~Serra~Cassano, and D.~Vittone.
\newblock A negative answer to the {B}ernstein problem for intrinsic graphs in
  the {H}eisenberg group.
\newblock {\em Boll. Unione Mat. Ital. (9)}, 1(3):709--727, 2008.

\bibitem{MR2043961}
S.~D. Pauls.
\newblock Minimal surfaces in the {H}eisenberg group.
\newblock {\em Geom. Dedicata}, 104:201--231, 2004.

\bibitem{MR2225631}
S.~D. Pauls.
\newblock {$H$}-minimal graphs of low regularity in {$\Bbb H^1$}.
\newblock {\em Comment. Math. Helv.}, 81(2):337--381, 2006.

\bibitem{2013arXiv1307.4442P}
A.~{Pinamonti}, F.~{Serra Cassano}, G.~{Treu}, and D.~{Vittone}.
\newblock {BV Minimizers of the area functional in the Heisenberg group under
  the bounded slope condition}.
\newblock {\em ArXiv e-prints}, July 2013.

\bibitem{MR2448649}
M.~Ritor{\'e}.
\newblock Examples of area-minimizing surfaces in the sub-{R}iemannian
  {H}eisenberg group {$\Bbb H^1$} with low regularity.
\newblock {\em Calc. Var. Partial Differential Equations}, 34(2):179--192,
  2009.

\bibitem{MR2271950}
M.~Ritor{\'e} and C.~Rosales.
\newblock Rotationally invariant hypersurfaces with constant mean curvature in
  the {H}eisenberg group {$\Bbb H^n$}.
\newblock {\em J. Geom. Anal.}, 16(4):703--720, 2006.

\bibitem{MR2435652}
M.~Ritor{\'e} and C.~Rosales.
\newblock Area-stationary surfaces in the {H}eisenberg group {$\Bbb H^1$}.
\newblock {\em Adv. Math.}, 219(2):633--671, 2008.

\bibitem{MR2875642}
C.~Rosales.
\newblock Complete stable {CMC} surfaces with empty singular set in {S}asakian
  sub-{R}iemannian 3-manifolds.
\newblock {\em Calc. Var. Partial Differential Equations}, 43(3-4):311--345,
  2012.

\end{thebibliography}

\end{document}